\newcommand{\cB}{{\mathcal B}}
\newcommand{\cF}{{\mathcal F}}
\newcommand{\cG}{{\mathcal G}}
\newcommand{\cN}{{\mathcal N}}
\newcommand{\cS}{{\mathcal S}}
\newcommand{\cU}{{\mathcal U}}
\DeclareMathOperator{\var}{Var}
\newcommand{\cov}{\operatorname{Cov}}
\newcommand{\Ind}{{\mathds 1}}
\newcommand{\essinf}{\mathop{\mathrm{ess\,inf}}\limits}
\DeclarePairedDelimiter{\abs}{\lvert}{\rvert}
\DeclareMathOperator{\supp}{supp}
\newcommand{\R}{\mathbb{R}}
\newcommand{\C}{\mathbb{C}}
\newcommand{\bbF}{\mathbb{F}}
\newcommand{\bbG}{\mathbb{G}}
\newtheorem{theorem}{Theorem}[section]
\newtheorem{corollary}[theorem]{Corollary}      %
\newtheorem{lemma}[theorem]{Lemma}              %
\newtheorem{proposition}[theorem]{Proposition}  %
\theoremstyle{definition}
\newtheorem{example}[theorem]{Example} %
\newtheorem{definition}[theorem]{Definition} %
\newtheorem{remark}[theorem]{Remark}%
\newtheorem{assumption}[theorem]{Assumption}%
\numberwithin{equation}{section}
\title{An extended CIR process with stochastic discontinuities}
\author{Claudio Fontana}
\address{University of Padova, Department of Mathematics}
\email{fontana@math.unipd.it}
\author{Simone Pavarana}
\address{University of Freiburg, Department of stochastic mathematics}
\email{simone.pavarana@stochastik.uni-freiburg.de}
\author{Thorsten Schmidt}
\address{University of Freiburg, Department of stochastic mathematics}
\email{thorsten.schmidt@stochastik.uni-freiburg.de}
\begin{document}

\begin{abstract}
We study an extension of the Cox–Ingersoll–Ross (CIR) process that incorporates jumps at deterministic dates, referred to as \emph{stochastic discontinuities}. Our main motivation stems from short-rate modelling in the context of overnight rates, which often exhibit spikes at predetermined dates corresponding to central bank meetings. We provide a formal definition of a CIR process with stochastic discontinuities, where the jump sizes depend on the pre-jump state, thereby allowing for both upward and downward movements as well as potential autocorrelation among jumps. Under mild assumptions, we establish existence of such a process and identify sufficient and necessary conditions under which the process inherits the affine property of its continuous counterpart. We illustrate our results with practical examples that generate both upward and downward jumps while preserving affinity and non-negativity. In particular, we show that a stochastically discontinuous CIR process can be constructed by applying a deterministic c\`adl\`ag time-change to a standard CIR process. Finally, we further enrich the affine framework by characterizing conditions that ensure infinite divisibility of the extended CIR process.
\end{abstract}

\keywords{CIR model, stochastic discontinuities, affine processes, semimartingales, time-change, infinite divisibility, short-rate modeling, overnight rates}

\maketitle
\begin{center}
\textit{(Preliminary version)}
\end{center}

\section{Introduction}

Jumps at predetermined dates, often driven by monetary policy decisions or liquidity constraints, have emerged as a distinctive feature of overnight rates such as SOFR (Secured Overnight Financing Rate) in the US; see Figure~\ref{fig:SOFR}. These scheduled jumps introduce \emph{stochastic discontinuities} (i.e., discontinuities occurring at ex-ante known points in time) into the dynamics of overnight rates. These features cannot be adequately captured by classical short-rate models, motivating new approaches. The work of \cite{Fontana2024} provides a rigorous treatment of interest rate modelling with stochastic discontinuities, addressing the issue of absence of arbitrage within the HJM framework and discussing pricing and hedging in the setting of affine semimartingales, as introduced by \cite{keller-ressel2019}. In particular, they extend the classical Hull--White model by incorporating stochastic discontinuities with independent Gaussian jump sizes. Related approaches with a more empirical focus have been proposed by \cite{BackwellHayes21} and \cite{BraceGellertSchloegl22}. While we focus here on jumps at deterministic dates to remain within the affine framework, in principle, predictable jump times could also be treated following the approach of \cite{GehmlichSchmidt2018} and \cite{FontanaSchmidt2018}.

Following the inflationary shock and the onset of the COVID-19 pandemic in 2020, central banks implemented rapid and substantial rate hikes, moving away from the negative interest rate regime that had characterized the previous decade. This development motivates the use of short-rate models that preclude negative interest rates, for which the Cox--Ingersoll--Ross (CIR) model is a natural candidate. 

The contribution of this work is to extend the classical univariate CIR framework by incorporating stochastic discontinuities, where the distribution of jump sizes depends explicitly on the pre-jump state. In contrast to the standard affine jump–diffusion (AJD) framework commonly employed in default intensity modeling (see, e.g., \cite{duffie2001risk}), our construction permits both upward and downward jumps in the state process. The scope of potential financial applications is not limited to interest rate modeling; it naturally extends to settings where scheduled jumps occur and non-negativity of the process is essential, such as in volatility modeling within the Heston framework. In this preliminary version of the paper, we focus on establishing the mathematical properties of the model — most notably its affine structure — while leaving a detailed study of applications to future research.

The paper is organized as follows. In Section~\ref{sec:1}, we introduce the notion of an extended CIR process, with particular emphasis on its state space. Under mild assumptions, we establish existence and uniqueness of a strong solution to the corresponding SDE. Section~\ref{sec:2} is devoted to the affine property: we derive necessary and sufficient conditions ensuring that the extended CIR process remains both affine and non-negative. We further illustrate the theory with two constructive examples and provide numerical simulations that highlight the occurrence of both upward and downward jumps. In particular, we demonstrate that the affine property in a semimartingale setting is preserved under deterministic time changes, and that stochastic discontinuities can be incorporated into a CIR dynamics through a deterministic c\`adl\`ag time change. Section~\ref{sec:3} concludes the analysis by further extending the affine framework, establishing conditions under which the extended CIR process is an infinitely divisible semimartingale.

\begin{figure}[t]
    \centering
\includegraphics[width=0.9\textwidth]{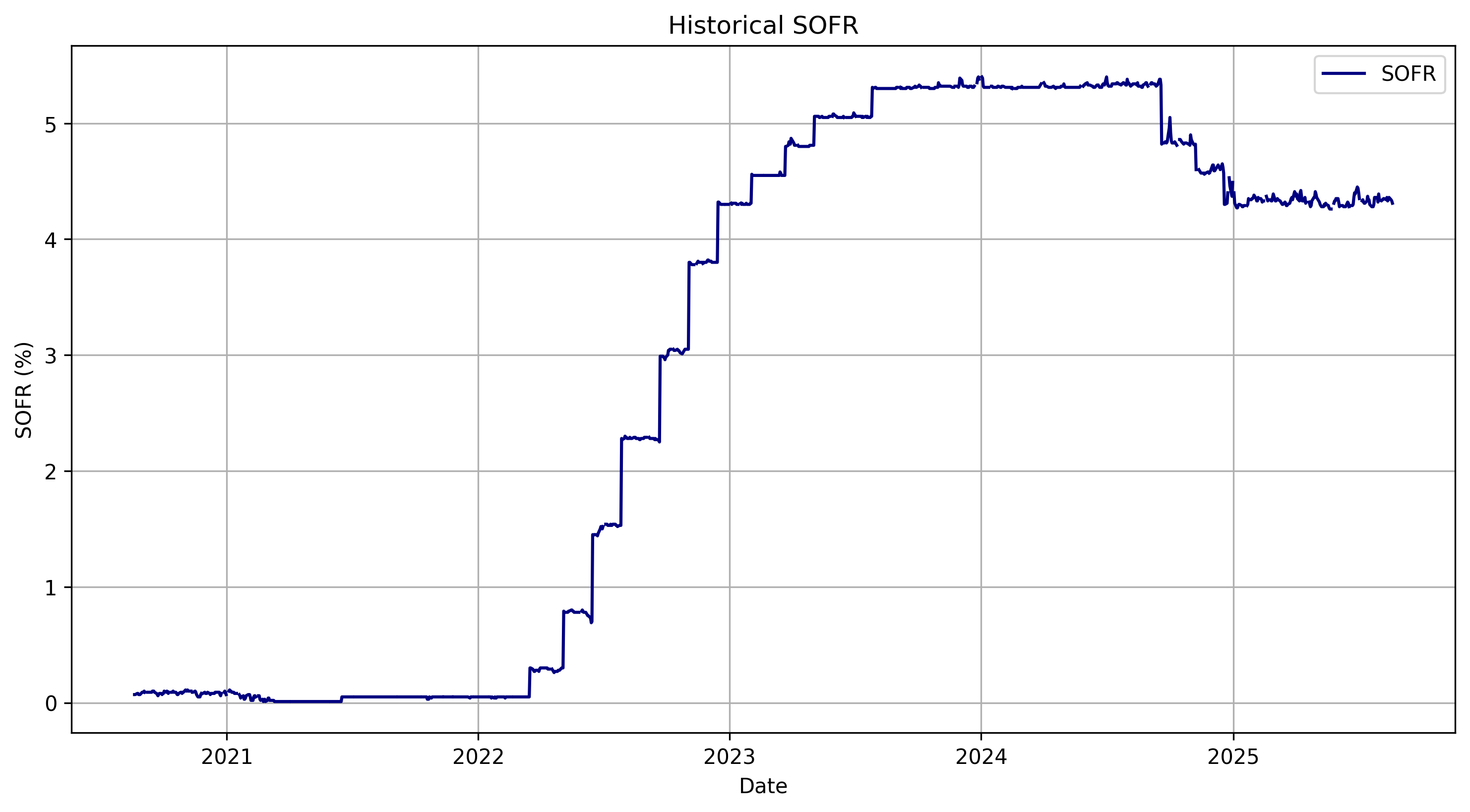}
 \caption{SOFR time series exhibiting scheduled jumps at predetermined dates. 
    Source: Federal Reserve Bank of New York, \emph{Secured Overnight Financing Rate (SOFR)}, 
    retrieved from FRED, Federal Reserve Bank of St. Louis; \url{https://fred.stlouisfed.org/series/SOFR}, August 21, 2025.}
    \label{fig:SOFR}
\end{figure}

\section{The extended CIR process}\label{sec:1}

In this section, we introduce the extended CIR process as the non-negative solution of an SDE that combines the standard CIR dynamics with a pure jump component. The jumps occur at a sequence of deterministic dates, with sizes depending on the pre-jump state. The jump component can be expressed in terms of its associated random measure, which allows us to rewrite the SDE as driven by two martingale parts: a continuous one given by the Brownian motion and a discontinuous one driven by the compensated jump measure. A key distinction from classical jump dynamics based on Poisson processes is that the compensator is not absolutely continuous, although it remains deterministic. We establish existence and uniqueness of a strong solution to this SDE and conclude the section by showing that the state dependence of the jump sizes naturally induces serial correlation among the jumps.

We work on a complete stochastic basis 
\( (\Omega, \mathcal{F}, \mathbb{F}, P) \) (see Definitions~1.2 and~1.3 in \cite{JacodShiryaev}), where the filtration \( \mathbb{F} = (\mathcal{F}_t)_{t \geq 0} \) supports both a standard Brownian motion \( W \) and a sequence of i.i.d.\ random variables \( (Z_n)_{n \geq 1} \), uniformly distributed on \([0,1]\) and independent of \( W \). Fix parameters \(\theta \geq 0\) (long-term mean), \(\kappa \geq 0\) (speed of mean reversion), and \(\sigma \geq 0\) (volatility), and let \((F_n)_{n\geq 1}\) be a family of measurable functions 
\[
F_n : \mathbb{R}_+ \times [0,1] \to \mathbb{R}.
\]
Moreover, let \( \mathcal{S} = (s_n)_{n \geq 1} \) be an increasing sequence of deterministic jump times.

We now introduce the extended CIR process with stochastic discontinuities.

\begin{definition}[Extended CIR process]\label{def:CIR}
A stochastic process \( X \) is called an \emph{extended CIR process} if
\[
P(X_t \geq 0 \;\; \text{for all } t \geq 0) = 1,
\]
and \( X \) is a strong solution to
\begin{equation}\label{eq:CIR}
    dX_t = \kappa (\theta - X_t) \, dt + \sigma \sqrt{X_t} \, dW_t + dJ_t, \qquad t \geq 0,
\end{equation}
with initial condition \(X_0 = x \geq 0\), where the jump component is given by
\[
J_t = \sum_{n:\,s_n \leq t} F_n(X_{s_n-}, Z_n), \qquad t \geq 0.
\]
\end{definition}

\begin{remark}
The functions \( F_n \) determine how the jump sizes depend on the pre-jump value of \( X \). Defining
\begin{equation}\label{eq:jump_size}
\xi_n = F_n(X_{s_n-}, Z_n), \quad n \geq 1,
\end{equation}
the conditional distribution of the jumps can be derived from \( F_n \) by standard change-of-variable arguments.

From a modeling perspective, as illustrated in the examples below (see Section \ref{sec:ex}), it is often more convenient to specify directly the conditional distribution of the jump size \( \xi_n \). In this case, given
\[
p_n(x,A) = P(\xi_n \in A \mid X_{s_n-} = x), 
\qquad A \in \mathcal{B}(\mathbb{R}), \; x \in \mathbb{R}_+,
\]
the corresponding function \( F_n \) satisfying \eqref{eq:jump_size} can be written as
\[
F_n(x,z) = G_n^{\leftarrow}(x,z),
\]
where \( G_n(x,\cdot) \) denotes the (regular) conditional distribution function
\[
G_n(x,y) := p_n(x,(-\infty,y]), \quad y \in \mathbb{R},
\]
and \( G_n^{\leftarrow}(x,\cdot) \) its generalized inverse, defined by
\[
G_n^{\leftarrow}(x,z) := \inf \{ y \in \mathbb{R} : G_n(x,y) \ge z \}, \quad z \in (0,1).
\]
See \cite[Proposition~7.2]{McNeil2015Quantitative}.
\end{remark}

\subsection{Existence of the extended CIR process}

The well-posedness of Definition~\ref{def:CIR} requires establishing the existence of a non-negative strong solution to the SDE~\eqref{eq:CIR}. To this end, we impose the following standing assumption. 

\begin{assumption}[No accumulation of jump times]\label{ass:accumulation}
The sequence of jump times \( \mathcal{S}=(s_n)_{n\geq 1} \) satisfies the following limiting behavior:
\[
s_n \to \infty \quad \text{as } n \to \infty .
\]
\end{assumption}

\begin{remark}
In contrast to most models in the literature, which incorporate only finitely many scheduled jumps in the short-rate dynamics (see, e.g., \cite{Piazzesi2001}, \cite{BraceGellertSchloegl22}, and \cite{Fontana2024}), our approach allows for countably many jump times, provided that they do not accumulate in finite time. 
\end{remark}

A second requirement for our results is that the jumps do not lead the process outside the state space. This is formalized in the following definition.

\begin{definition}[Admissibility of jump sizes]\label{def:support}
A sequence of jump size functions \( (F_n)_{n \geq 1} \) is called \emph{admissible} if it satisfies
\[
\essinf_{z \in [0,1]} F_n(x,z) \geq -x, 
\qquad \text{for all } x \in \R_+, \; n \geq 1.
\]
\end{definition}

\begin{remark}
Note that in the case of almost surely upward jumps, as in standard AJD models where \(F_n(x,z) \ge 0\) for all \(x \in \R_+\) and almost every \(z \in [0,1]\), the state space is never violated, and the dependence on the pre-jump state may be ignored.  

To allow for downward jumps, the dependence on the pre-jump state becomes crucial. The admissibility condition ensures that the process remains non-negative almost surely.
\end{remark}

We can now state the strong existence result.

\begin{theorem}\label{th:existence}
Suppose that Assumption~\ref{ass:accumulation} holds and that the sequence $(F_n)_{n\geq 1}$ is admissible. Then, for any initial condition \( X_0 = x \geq 0 \), there exists a unique strong solution \( X^x = (X^x_t)_{t \geq 0} \) to the SDE~\eqref{eq:CIR}, which remains non-negative almost surely:
\[
P\big(X_t \geq 0 \;\; \text{for all } t \geq 0\big) = 1.
\]
\end{theorem}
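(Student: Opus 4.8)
The plan is to build $X$ by interlacing along the deterministic grid $\cS=(s_n)_{n\ge1}$: between consecutive jump times the equation \eqref{eq:CIR} is just the classical square-root SDE, while at each $s_n$ one adds the jump $F_n(X_{s_n-},Z_n)$, and admissibility (Definition~\ref{def:support}) is exactly what keeps the process in $\R_+$ across that jump. The classical input I would invoke is the well-known theory of the square-root diffusion: for $y\ge0$, the SDE $dY_t=\kappa(\theta-Y_t)\,dt+\sigma\sqrt{Y_t^+}\,dW_t$ with $Y_0=y$ has a unique strong solution (pathwise uniqueness by the Yamada--Watanabe criterion, the drift being Lipschitz and $x\mapsto\sigma\sqrt{x^+}$ being $\tfrac12$-Hölder; strong existence then following from weak existence), and this solution satisfies $Y_t\ge0$ for all $t$ a.s., so the positive part is immaterial. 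The same holds with an $\cF_0$-measurable initial value independent of $W$, by conditioning on its value.

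I would then set $s_0:=0$ and define $X$ recursively. Assuming $X$ has been built on $[0,s_{n-1}]$ with $X_{s_{n-1}}\ge0$ and $\cF_{s_{n-1}}$-measurable, let $\bar X^{(n)}$ be the continuous CIR solution on $[s_{n-1},s_n]$ started from $X_{s_{n-1}}$ and driven by $(W_{s_{n-1}+t}-W_{s_{n-1}})_{t\ge0}$, which is a Brownian motion independent of $\cF_{s_{n-1}}$; put $X_t:=\bar X^{(n)}_t$ for $t\in[s_{n-1},s_n)$ and
\[
X_{s_n}:=\bar X^{(n)}_{s_n}+F_n\bigl(\bar X^{(n)}_{s_n},Z_n\bigr).
\]
Then $X_{s_n-}=\bar X^{(n)}_{s_n}$ exists by continuity of $\bar X^{(n)}$, and $X_{s_n}$ is $\cF_{s_n}$-measurable since $F_n$ is jointly measurable and $Z_n$ is $\cF_{s_n}$-measurable. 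Assumption~\ref{ass:accumulation} ensures that for every $T>0$ the set $\{n:s_n\le T\}$ is finite, so finitely many steps exhaust $[0,T]$; letting $T\to\infty$ gives a c\`adl\`ag, $\mathbb{F}$-adapted process on $\R_+$. Adding up the drift and stochastic integrals over the subintervals and observing that the jump term $J$ collects exactly the increments $F_n(X_{s_n-},Z_n)$ at the $s_n$, one verifies that $X$ solves \eqref{eq:CIR} globally.

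Non-negativity follows by induction over the subintervals. If $X_{s_{n-1}}\ge0$ then $\bar X^{(n)}\ge0$ on $[s_{n-1},s_n]$ by the classical step, and admissibility gives $F_n(\bar X^{(n)}_{s_n},Z_n)\ge-\bar X^{(n)}_{s_n}$ a.s.: conditioning on $\cF_{s_n-}$ and using $Z_n\sim\mathrm U[0,1]$, the event $\{F_n(x,Z_n)<-x\}$ has probability zero for every $x\in\R_+$ because $\essinf_{z\in[0,1]}F_n(x,z)\ge-x$; hence $X_{s_n}\ge0$. Since $X_0=x\ge0$, this gives $P(X_t\ge0\text{ for all }t\ge0)=1$. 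For uniqueness, if $X,\tilde X$ are two non-negative strong solutions, then on $[0,s_1)$ both solve the pure CIR SDE with the same $W$ and the same initial value, so pathwise uniqueness forces $X\equiv\tilde X$ on $[0,s_1)$ and thus $X_{s_1-}=\tilde X_{s_1-}$; then $X_{s_1}=X_{s_1-}+F_1(X_{s_1-},Z_1)=\tilde X_{s_1-}+F_1(\tilde X_{s_1-},Z_1)=\tilde X_{s_1}$, and the same argument propagates to $[s_1,s_2),[s_2,s_3),\dots$. As $s_n\to\infty$, this yields $X=\tilde X$ up to indistinguishability.

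I do not expect a deep obstacle: the content is in making the pasting rigorous. The points requiring care are (i) invoking the correct form of the classical square-root result---pathwise uniqueness despite the non-Lipschitz diffusion coefficient, together with the (comparison-type) argument for non-negativity; (ii) checking that each continuous piece is $\mathbb{F}$-adapted after the Brownian time shift and with a random but $\cF_{s_{n-1}}$-measurable initial value; and (iii) ensuring the left limits $X_{s_n-}$ exist so that $F_n(X_{s_n-},Z_n)$ is well defined. The role of Assumption~\ref{ass:accumulation} is precisely to rule out an accumulation point of jump times, so that the interlacing terminates after finitely many steps on every compact interval.
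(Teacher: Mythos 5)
Your proposal is correct and follows essentially the same route as the paper: a piecewise construction on the intervals between the deterministic jump times, invoking the Yamada--Watanabe result for the square-root diffusion on each interval, using admissibility to preserve non-negativity across each jump, and relying on Assumption~\ref{ass:accumulation} to patch the pieces together. If anything, you are somewhat more explicit than the paper about adaptedness after the Brownian time shift and about propagating pathwise uniqueness interval by interval.
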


\begin{proof}
In view of Assumption~\ref{ass:accumulation}, we construct the solution to the SDE~\eqref{eq:CIR} with initial condition \( X_0 = x \geq 0 \) in a piecewise manner. First, consider the dynamics on the interval \( [0, s_1) \), where no jumps occur:
\[
dX_t = \kappa (\theta - X_t) \, dt + \sigma \sqrt{X_t} \, dW_t, \qquad t \in [0, s_1).
\]
By the Yamada--Watanabe theorem for one-dimensional diffusions (see \cite[Proposition~2.13]{KaratzasShreve1988}), the SDE on \( [0, s_1) \) admits a unique non-negative strong solution \( \hat{X}^x = (\hat{X}^x_t)_{t \in [0, s_1)} \).

Proceeding inductively, for each \( n \geq 1 \) we consider the interval \( [s_n, s_{n+1}) \), where the dynamics is given by
\[
dX_t = \kappa (\theta - X_t) \, dt + \sigma \sqrt{X_t} \, dW_t, \qquad t \in (s_n, s_{n+1}),
\]
with initial condition
\[
X_{s_n} = X^{X_{s_{n-1}}}_{s_n-} + F_n(X_{s_n-},Z_n).
\]
By Definition~\ref{def:support}, 
\[
F_n(x,z) \geq -x, \qquad \text{for all } x \in \mathbb{R}_+ \text{ and a.e.\ } z \in [0,1],
\]
so that
\begin{align*}
P(X_{s_n} \geq 0) 
&= P\!\left(F_n(X^{X_{s_{n-1}}}_{s_n-},Z_n)\geq -X^{X_{s_{n-1}}}_{s_n-}\right) \\
&= E\!\left[P\!\left(F_n(X^{X_{s_{n-1}}}_{s_n-},Z_n)\geq -X^{X_{s_{n-1}}}_{s_n-}\,\middle|\, \mathcal{F}_{s_n-}\right)\right] \\
&= 1,
\end{align*}
for all \( n \geq 1 \). Hence,
\[
\operatorname{supp}(X_{s_n}) \subseteq \mathbb{R}_+.
\]
By the Yamada--Watanabe argument, the SDE on \( [s_n, s_{n+1}) \) again admits a unique non-negative strong solution, denoted \( \hat{X}^{X_{s_n}} = (\hat{X}^{X_{s_n}}_t)_{t \in [s_n, s_{n+1})} \).

The full process \( X^x \) solving \eqref{eq:CIR} is then obtained by patching together the local solutions:
\[
X^x_t = \hat{X}^x_t \Ind_{[0, s_1)}(t) + \sum_{n \geq 1} \hat{X}^{X_{s_n}}_t \Ind_{[s_n, s_{n+1})}(t), \quad \text{for all } t \geq 0.
\]
This completes the proof.
\end{proof}

\begin{remark}
Theorem~\ref{th:existence} guarantees the existence of a non-negative solution to the SDE~\eqref{eq:CIR}. To ensure strict positivity, one must additionally impose the Feller condition
\[
2\theta\kappa \geq \sigma^2,
\]
which is sufficient to ensure that the continuous CIR process remains strictly positive (see Proposition~1.2.15 in \cite{Alfonsi2015Affine}). If, in addition, the strict inequality in Definition~\ref{def:support} holds — thereby preventing the process from jumping to zero — then the proof of Theorem~\ref{th:existence} can be easily adapted to yield a strictly positive solution to the SDE~\eqref{eq:CIR}.
\end{remark}

Modeling the jump sizes via the transportation maps \( F_n \) and the sequence \( (Z_n)_{n \geq 1} \) of i.i.d.\ uniform random variables, independent of \( W \), ensures that the solution process is adapted to the natural filtration generated by the driving noise processes. Since this filtration is contained in \( \mathbb{F} \), the process is a strong solution to the SDE~\eqref{eq:CIR}. 

\begin{remark}
Without the transportation maps \( F_n \), it would not be possible to ensure adaptedness of the solution. In that case, one would typically resort to a weak solution framework and interpret the SDE as a martingale problem; see, for instance, \cite[Section~III.2]{JacodShiryaev} and \cite{criens23martingale} for the case with stochastic discontinuities. 
\end{remark}

Notably, one can rewrite the SDE as:
\[
dX_t = \kappa (\theta - X_t) \, dt + \sigma \sqrt{X_t} \, dW_t + \int_0^1 F_t(X_{t-}, z)\, \mu(dt, dz), \quad t \geq 0,
\]
where \( \mu \) is the random measure associated with the scheduled jumps, defined as
\begin{equation}\label{eq:rm}
\mu(dt, dz) = \sum_{n \geq 1} \delta_{(s_n, Z_n)}(dt, dz).
\end{equation}
By Theorem~1.8 in \cite{JacodShiryaev}, there exists a predictable random measure 
\[
\nu(dt, dz) = K(t,dz)\, dA_t, 
\] 
where \( A = (A_t)_{t \geq 0} \) is a predictable, increasing, finite-variation process, and \( K:\R_+\times \mathcal{B}([0,1]) \to \mathbb{R}_+ \) is a stochastic kernel, such that the compensated jump term
\[
\int_0^{\cdot} \int_0^1 F_t(X_{t-}, z)\, \big(\mu(dt, dz) - \nu(dt, dz)\big)
\]
is a local martingale. 

Thus, the solution to the SDE~\eqref{eq:CIR} can be interpreted as the solution to an SDE driven by a continuous local martingale (the Brownian term) and a purely discontinuous local martingale (the compensated jump term); see Definition~2.24(a) in \cite{JacodShiryaev}.

In the present setting, it is straightforward to check that, in analogy with the case of Poisson random measures, the predictable compensator \(\nu\) reduces to a deterministic measure.

\begin{proposition}\label{prop:compensator}
The predictable compensator $\nu$ of the random measure $\mu$ defined in \eqref{eq:rm} is
\[
\nu(dt,dz) = \Ind_{[0,1]}(z)\,dz \, dN(t),
\]
where $N:\mathbb{R}_+\to\mathbb{N}_0$ denotes the deterministic counting process of jump times, that is,
\[
N(t)=\sum_{n\ge 1}\Ind_{\{s_n\le t\}}, \qquad t\ge 0.
\]
\end{proposition}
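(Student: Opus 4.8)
The plan is to check by hand that the non-random measure
\[
\widetilde{\nu}(dt,dz):=\Ind_{[0,1]}(z)\,dz\,dN(t)
\]
has the defining property of the predictable compensator of $\mu$, and then to invoke the uniqueness part of the compensation theorem (\cite[Theorem~1.8]{JacodShiryaev}). First I would record that $\widetilde{\nu}$ is a legitimate predictable random measure: being deterministic it is trivially predictable, by Assumption~\ref{ass:accumulation} one has $\widetilde{\nu}([0,t]\times[0,1])=N(t)<\infty$ for every $t\ge0$ so the relevant $\sigma$-finiteness/integrability conditions hold, and $\widetilde{\nu}(\{t\}\times[0,1])=N(t)-N(t-)\in\{0,1\}$ as required for the compensator of an integer-valued random measure. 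It then remains to prove the fundamental identity
\[
E\big[(H*\mu)_{\infty}\big]=E\big[(H*\widetilde{\nu})_{\infty}\big]
\]
for every non-negative predictable function $H$ on $\Omega\times\R_+\times[0,1]$, where as usual $(H*\mu)_t=\int_{[0,t]\times[0,1]}H\,d\mu=\sum_{n:\,s_n\le t}H(\omega,s_n,Z_n)$, and analogously for $\widetilde{\nu}$.

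By a functional monotone class argument it suffices to establish this identity for $H$ of the product form $H(\omega,t,z)=\Ind_A(\omega)\,\Ind_{(u,v]}(t)\,g(z)$ with $0\le u<v$, $A\in\mathcal{F}_u$, and $g:[0,1]\to\R_+$ bounded measurable, since such functions are predictable, stable under products, and generate the relevant product $\sigma$-field. Fix such an $H$. By Assumption~\ref{ass:accumulation} the set $I:=\{n\ge1:\ u<s_n\le v\}$ is finite; put $k:=\#I=N(v)-N(u)$. On the jump side, $(H*\mu)_{\infty}=\Ind_A\sum_{n\in I}g(Z_n)$, and for each $n\in I$ one has $s_n>u$, so that $Z_n$ is independent of $\mathcal{F}_u$ (hence of $A$) and uniform on $[0,1]$; therefore $E[(H*\mu)_{\infty}]=P(A)\sum_{n\in I}E[g(Z_n)]=k\,P(A)\int_0^1 g(z)\,dz$. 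On the compensator side, $\widetilde{\nu}$ being deterministic,
\[
(H*\widetilde{\nu})_{\infty}=\Ind_A\Big(\int_0^1 g(z)\,dz\Big)\big(N(v)-N(u)\big)=k\,\Ind_A\int_0^1 g(z)\,dz,
\]
so that $E[(H*\widetilde{\nu})_{\infty}]=k\,P(A)\int_0^1 g(z)\,dz$ as well. The two sides coincide; monotone convergence then extends the identity to all non-negative predictable $H$, and uniqueness of the predictable compensator gives $\nu=\widetilde{\nu}$.

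The one point that deserves care is the independence invoked above, namely that $Z_n$ is independent of $\mathcal{F}_{s_n-}$ (equivalently, of $\mathcal{F}_u$ whenever $u<s_n$): this is precisely what forces the compensator to be deterministic. It holds automatically when $\mathbb{F}$ is the (right-continuous, completed) natural filtration generated by $W$ and the marked point process $\sum_{n}\delta_{(s_n,Z_n)}$, because the $Z_n$ are i.i.d., independent of $W$, and the $s_n$ are deterministic; for a strictly larger filtration it has to be part of the modelling hypotheses, and is implicit in the standing setup. An alternative, filtration-light route that bypasses the random-measure calculus is to argue one jump time at a time: for fixed $n$ the deterministic increasing process $t\mapsto\Ind_{\{s_n\le t\}}$ is its own compensator, the process $\big(\Ind_{\{s_n\le t\}}\big(g(Z_n)-\int_0^1 g(z)\,dz\big)\big)_{t\ge0}$ is a martingale since its only jump has zero conditional mean given $\mathcal{F}_{s_n-}$, and summing over $n$ (a locally finite sum by Assumption~\ref{ass:accumulation}) recovers exactly $H*\mu-H*\widetilde{\nu}$ for product $H$ as above.
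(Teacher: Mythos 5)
Your proof is correct and follows essentially the same route as the paper: both verify the defining identity $E[(H*\mu)_\infty]=E[(H*\nu)_\infty]$ from Theorem~1.8 of \cite{JacodShiryaev} for non-negative predictable $H$ and conclude by uniqueness of the predictable compensator. The only differences are cosmetic --- you reduce to product-form $H$ via a monotone class argument where the paper conditions on $\mathcal{F}_{s_n-}$ directly for general predictable $H$ --- and your explicit flagging of the required independence of $Z_n$ from $\mathcal{F}_{s_n-}$ makes precise a point the paper's computation uses implicitly.
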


\begin{proof}
By Theorem~1.8 in \cite{JacodShiryaev}, a random measure $\nu$ is the predictable compensator of $\mu$ if and only if, for every non-negative predictable function $H:\Omega\times\R_+\times [0,1]\to\R_+$, one has
\begin{equation}\label{eq:compensator}
E\Bigg[\int_0^\infty\int_0^1 H_t(z)\,\mu(dt,dz)\Bigg]
=E\Bigg[\int_0^\infty\int_0^1 H_t(z)\,\nu(dt,dz)\Bigg].
\end{equation}
Consider $\nu$ as given in the statement. In particular,
\[
\nu(dt,dz)=\sum_{n\geq 1}\Ind_{[0,1]}(z)\,dz \,\delta_{s_n}(dt).
\]
Substituting this into the right-hand side of \eqref{eq:compensator} yields  
\[
E\Bigg[\int_0^\infty\int_0^1 H_t(z)\,\nu(dt,dz)\Bigg]
=E\Bigg[\sum_{n\geq 1}\int_0^\infty\int_0^1 H_t(z)\Ind_{[0,1]}(z)\,dz\,\delta_{s_n}(dt)\Bigg].
\]
By the sifting property of Dirac measures, we obtain  
\[
E\Bigg[\int_0^\infty\int_0^1 H_t(z)\,\nu(dt,dz)\Bigg]
=E\Bigg[\sum_{n\geq 1}\int_0^1 H_{s_n}(z)\Ind_{[0,1]}(z)\,dz\Bigg]. 
\]
The right-hand side can be rewritten as  
\[
E\Bigg[\sum_{n\geq 1}\int_0^1 H_{s_n}(z)\Ind_{[0,1]}(z)\,dz\Bigg]
=E\Bigg[\sum_{n\geq 1}E\big[H_{s_n}(Z_n)\Ind_{[0,1]}(Z_n)\mid\cF_{s_n-}\big]\Bigg],
\]
which leads to  
\begin{align*}
E\Bigg[\int_0^\infty\int_0^1 H_t(z)\,\nu(dt,dz)\Bigg]
&=E\Bigg[\sum_{n\geq 1}E\big[H_{s_n}(Z_n)\Ind_{[0,1]}(Z_n)\mid\cF_{s_n-}\big]\Bigg]\\
&=E\Bigg[\sum_{n\geq 1}H_{s_n}(Z_n)\Ind_{[0,1]}(Z_n)\Bigg]\\
&=E\Bigg[\int_0^\infty\int_0^1 H_t(z)\,\mu(dt,dz)\Bigg],
\end{align*}
where the last equality follows again from the sifting property of Dirac measures together with the definition of $\mu$.

Hence, $\nu$ satisfies \eqref{eq:compensator}, and thus it is the compensator of $\mu$, unique up to sets of $P$-null measure.
\end{proof}

To conclude this section, we highlight a key difference with the extended Hull--White model of \cite{Fontana2024}, which generalises the classical Hull--White framework by adding a jump component with deterministic jump times and independent Gaussian jump sizes. In contrast, the extended CIR dynamics given by \eqref{eq:CIR} naturally allow for a non-trivial correlation structure between jumps.

To formalise this, let $c(n,m)$ denote the covariance between the jumps at times $s_n$ and $s_m$, i.e.
\[
c(n,m)\coloneqq\cov\big(F_n(X_{s_n-},Z_n),F_m(X_{s_m-},Z_m)\big).
\]

\begin{proposition}\label{prop:cov_jump}
For all $n,m\geq 1$, the covariance function $c(n,m)$ is given by
\[
c(n,m)=\cov\big(\mu_n(X_{s_n-}),\mu_m(X_{s_m-})\big),
\]
where 
\[
\mu_n(x)\coloneqq \int_0^1 F_n(x,z)\,dz, 
\qquad x\in\R_+, \; n\geq 1.
\]
\end{proposition}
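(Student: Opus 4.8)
The plan is to peel off, by conditioning on the pre-jump $\sigma$-fields, the ``conditional mean'' part of each jump. For $k\geq 1$ set $\varepsilon_k:=\xi_k-\mu_k(X_{s_k-})$. Since $X_{s_k-}$ is $\mathcal{F}_{s_k-}$-measurable while $Z_k$ is independent of $\mathcal{F}_{s_k-}$ (by the construction of the stochastic basis), the regular conditional law of $Z_k$ given $\mathcal{F}_{s_k-}$ is the uniform law on $[0,1]$, whence
\[
E\big[\xi_k\mid\mathcal{F}_{s_k-}\big]=\int_0^1 F_k(X_{s_k-},z)\,dz=\mu_k(X_{s_k-}),\qquad E\big[\varepsilon_k\mid\mathcal{F}_{s_k-}\big]=0 .
\]
Taking expectations gives $E[\xi_k]=E[\mu_k(X_{s_k-})]$, so the ``product of the means'' contributions to $c(n,m)$ and to $\cov(\mu_n(X_{s_n-}),\mu_m(X_{s_m-}))$ already agree, and it remains to prove $E[\xi_n\xi_m]=E[\mu_n(X_{s_n-})\mu_m(X_{s_m-})]$.

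By symmetry of the covariance we may assume $n\leq m$. If $n<m$, then $s_n<s_m$, so $X_{s_n-}$ and $Z_n$ --- and hence $\xi_n$, $\mu_n(X_{s_n-})$ and $\varepsilon_n$ --- are $\mathcal{F}_{s_m-}$-measurable. Conditioning on $\mathcal{F}_{s_m-}$ and using $E[\xi_m\mid\mathcal{F}_{s_m-}]=\mu_m(X_{s_m-})$ gives $E[\xi_n\xi_m]=E[\xi_n\,\mu_m(X_{s_m-})]$, and inserting $\xi_n=\mu_n(X_{s_n-})+\varepsilon_n$ turns the desired identity into
\[
E\big[\varepsilon_n\,\mu_m(X_{s_m-})\big]=0 .
\]
(Equivalently: in the bilinear expansion of $\cov(\xi_n,\xi_m)$ the two summands carrying the factor $\varepsilon_m$ vanish by the tower property, since $\varepsilon_m$ is a centred increment paired with an $\mathcal{F}_{s_m-}$-measurable quantity.)

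Proving $E[\varepsilon_n\,\mu_m(X_{s_m-})]=0$ is the real content, and I expect it to be the main obstacle: unlike in the Hull--White model of \cite{Fontana2024} with exogenous jump sizes, here $\xi_n$ enters the state, so $X_{s_m-}$ genuinely depends on $\varepsilon_n$ when $m>n$, and the vanishing is not a pure measurability statement. My approach would be to condition on $\mathcal{F}_{s_n-}$, then on $Z_n$, and to invoke the Markov (flow) property of the solution on $[s_n,s_m)$ --- legitimate because the noise driving the SDE on that interval (the Brownian increments together with $Z_{n+1},\dots,Z_{m-1}$) is independent of $\mathcal{F}_{s_n}$ --- so as to write $E\big[\mu_m(X_{s_m-})\mid\mathcal{F}_{s_n-},Z_n\big]=h_{n,m}\big(X_{s_n-}+F_n(X_{s_n-},Z_n)\big)$ for a deterministic function $h_{n,m}$. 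Then $E\big[\varepsilon_n\,\mu_m(X_{s_m-})\mid\mathcal{F}_{s_n-}\big]$ reduces to
\[
\int_0^1\big(F_n(X_{s_n-},z)-\mu_n(X_{s_n-})\big)\,h_{n,m}\big(X_{s_n-}+F_n(X_{s_n-},z)\big)\,dz ,
\]
and one must argue that this $dz$-integral is a.s.\ zero. This is exactly where the modelling hypotheses have to be used --- for instance, jump sizes that are conditionally deterministic given the pre-jump state, or the affine structure developed later in the paper (under which $h_{n,m}$ is affine and explicit affine transition functionals are available) --- so I would present this cancellation as the substantive step rather than a routine computation. Finally, the diagonal case $n=m$, where $c(n,n)=\var(\xi_n)$, should be treated separately, as it reduces to controlling $\var(\varepsilon_n)$ directly.
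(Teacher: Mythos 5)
Your reduction is correct as far as it goes, and up to the crux it mirrors the paper's own argument with the roles of $n$ and $m$ swapped: the equality of the mean terms via $E[\xi_k\mid\cF_{s_k-}]=\mu_k(X_{s_k-})$ and the first conditioning step $E[\xi_n\xi_m]=E[\xi_n\,\mu_m(X_{s_m-})]$ for $n<m$ are exactly what the paper does. The divergence is in what happens next. The paper disposes of your remaining identity $E[\varepsilon_n\,\mu_m(X_{s_m-})]=0$ in one line, by asserting that conditionally on the earlier pre-jump $\sigma$-field the earlier jump and the later pre-jump state factorize, ``by the mutual independence of the sequence $(Z_n)_{n\geq 1}$''. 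You decline to make that assertion and instead isolate it as the substantive step still needing an argument. Your caution is the mathematically sound position: for $n<m$ the state $X_{s_m-}$ is a functional of $X_{s_n}=X_{s_n-}+\xi_n$, hence of $Z_n$, so independence of the $Z_k$'s does not yield the conditional factorization, and $E[\varepsilon_n\,\mu_m(X_{s_m-})]$ is a genuine covariance between the centred jump innovation and a function of the post-jump state.

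In fact this step cannot be closed under the stated hypotheses alone. Take $\kappa=\sigma=0$ (allowed by the paper's parameter ranges), $X_0=x_0>1$ deterministic, two jump times, $F_1(x,z)=\min(x,1)\operatorname{sign}(z-\tfrac12)$ (admissible, with $\mu_1\equiv 0$) and $F_2(x,z)=x$ (admissible, with $\mu_2(x)=x$). Then $X_{s_2-}=x_0+\xi_1$, so $c(1,2)=\cov(\xi_1,\,x_0+\xi_1)=\var(\xi_1)=1$, while $\cov\big(\mu_1(X_{s_1-}),\mu_2(X_{s_2-})\big)=0$. So the gap you located is a gap in the paper's proof, not merely in yours, and the identity can only be expected under extra structure of the kind you mention (e.g.\ $F_n(x,z)$ not depending on $z$, so that jumps are conditionally deterministic given the pre-jump state). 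Your remark about the diagonal is also on target: the paper's ``WLOG $n\geq m$'' silently covers $n=m$, where $\xi_m$ is not $\cF_{s_n-}$-measurable, and indeed $\var(\xi_n)=E[\var(\xi_n\mid\cF_{s_n-})]+\var(\mu_n(X_{s_n-}))$ strictly exceeds the claimed value whenever the conditional jump law is non-degenerate.
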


\begin{proof}
For notational convenience, set
\[
\xi_n \coloneqq F_n(X_{s_n-},Z_n), \qquad n \geq 1.
\]
Then the covariance can be written as
\begin{equation}\label{eq:cov}
c(n,m) \;=\; E[\xi_n \xi_m] - E[\xi_n]\,E[\xi_m], 
\qquad n,m \geq 1.
\end{equation}

We first analyze the second term. For every \(n \geq 1\), one has
\begin{align*}
E[\xi_n] = E\big[ F_n(X_{s_n-},Z_n) \big] = E\Big[ E\big[ F_n(X_{s_n-},Z_n) \,\big|\, \mathcal{F}_{s_n-} \big] \Big] = E\Bigg[ \int_0^1 F_n(X_{s_n-},z)\,dz \Bigg] = E\big[ \mu_n(X_{s_n-}) \big].
\end{align*}
Hence the product of expectations appearing in \eqref{eq:cov} takes the form
\begin{equation}\label{eq:cov1}
E[\xi_n]\,E[\xi_m] 
= E\big[\mu_n(X_{s_n-})\big]\,E\big[\mu_m(X_{s_m-})\big].
\end{equation}

For the first term in \eqref{eq:cov}, assume without loss of generality that \(n \geq m\). Then
\[
E[\xi_n \xi_m] 
= E\big[ F_n(X_{s_n-},Z_n)\,F_m(X_{s_m-},Z_m) \big] 
= E\Big[ E\big[ F_n(X_{s_n-},Z_n)\,F_m(X_{s_m-},Z_m) \,\big|\, \mathcal{F}_{s_n-} \big] \Big].
\]
Since \(s_n \geq s_m\), the random variable \(F_m(X_{s_m-},Z_m)\) is \(\mathcal{F}_{s_n-}\)-measurable. Therefore,
\[
E[\xi_n \xi_m] 
= E\Big[ F_m(X_{s_m-},Z_m)\,E\big[ F_n(X_{s_n-},Z_n) \,\big|\, \mathcal{F}_{s_n-} \big] \Big]
= E\big[ F_m(X_{s_m-},Z_m)\,\mu_n(X_{s_n-}) \big].
\]

Taking the conditional expectation with respect to \(\mathcal{F}_{s_m-}\), we obtain
\begin{align*}
E[\xi_n \xi_m] 
&= E\Big[ E\big[ F_m(X_{s_m-},Z_m)\,\mu_n(X_{s_n-}) \,\big|\, \mathcal{F}_{s_m-} \big] \Big] \\
&= E\Big[ E\big[ F_m(X_{s_m-},Z_m) \,\big|\, \mathcal{F}_{s_m-} \big] \, E\big[ \mu_n(X_{s_n-}) \,\big|\, \mathcal{F}_{s_m-} \big] \Big] \\
&= E\big[ \mu_m(X_{s_m-}) \, E[ \mu_n(X_{s_n-}) \mid \mathcal{F}_{s_m-}] \big],
\end{align*}
where the second equality follows from the mutual independence of the sequence \((Z_n)_{n \geq 1}\).

Finally, we conclude that
\[
E[\xi_n \xi_m] 
= E\big[ \mu_m(X_{s_m-}) \, E[ \mu_n(X_{s_n-}) \mid \mathcal{F}_{s_m-}] \big] 
= E\big[ \mu_n(X_{s_n-}) \, \mu_m(X_{s_m-}) \big].
\]
A symmetric argument shows that the same expression holds when \(m \geq n\).

Substituting this identity together with \eqref{eq:cov1} into \eqref{eq:cov} yields the desired result.
\end{proof}

The expression for \( c(n,m) \) in Proposition~\ref{prop:cov_jump} is in general non-zero, showing that jump sizes may exhibit serial correlation. In Section~\ref{sec:2}, we present an explicit example in which the covariance of jumps can be computed, illustrating how a dependence structure naturally arises through their dependence on the pre-jump state.

\section{The affine framework for extended CIR processes}\label{sec:2}

In this section, we investigate the conditions under which an extended CIR process is affine. Affine processes are Markov processes whose conditional characteristic function can be expressed in an exponential affine form; for a rigorous treatment, see \cite{DuffieFilipovicSchachermayer}. In financial applications, affine processes are widely used, particularly in interest rate modeling (see, e.g., \cite{filipovic2009term} and \cite{Fontana22}). 

The references above focus on stochastically continuous processes. In the presence of stochastic discontinuities, \cite{keller-ressel2019} introduced the concept of \emph{affine semimartingales}, which naturally extends the theory of affine processes to account for jumps at predictable times.

Let \( D \subseteq \mathbb{R}^d \) be a closed convex cone of full dimension, meaning it is a convex set closed under multiplication by positive scalars and has a linear hull equal to \( \mathbb{R}^d \). The complex dual cone space \( \mathcal{U} \) associated with \( D \) via the Fourier transform is given by:
\[
\mathcal{U} \coloneqq \{ u \in \mathbb{C}^d : \langle \Re(u), x \rangle \leq 0 \text{ for all } x \in D \}.
\]
A canonical example of a state space \( D \) is \( \mathbb{R}^m_+ \times \mathbb{R}^n \) with \( m+n=d \), whose dual cone is \( \mathbb{C}^m_- \times i\mathbb{R}^n \), where \( \mathbb{C}_- \coloneqq \{ u \in \mathbb{C} : \Re(u) \leq 0 \} \).

A semimartingale $X$ taking values in \( D \) is called  \emph{affine} if there exist \( \mathbb{C} \)- and \( \mathbb{C}^d \)-valued functions \( \phi_t(T,u) \) and \( \psi_t(T,u) \), continuous in \( u \in \cU \) and satisfying \( \phi_t(T,0) = \psi_t(T,0) = 0 \), such that:
\begin{equation}\label{eq:affine}
E \left[ e^{\langle u, X_T \rangle} \mid \mathscr{F}_t \right] = \exp\left(\phi_t(T,u) + \langle \psi_t(T,u), X_t \rangle\right), \quad 0 \leq t \leq T, \quad u \in \mathcal{U}.
\end{equation}

An affine semimartingale is called \emph{quasiregular} (see \cite[Definition 2.5]{keller-ressel2019}) if the following conditions hold:
\begin{itemize}
    \item[(i)] The functions \( \phi \) and \( \psi \) are of finite variation in \( t \) and are c\`adl\`ag in both \( t \) and \( T \).
    \item[(ii)] For all \( 0 < t \leq T \), the functions
    \[
    u \mapsto \phi_{t-}(T,u), \quad\text{and}\quad u \mapsto \psi_{t-}(T,u)
    \]
    are continuous on \( \mathcal{U} \).
\end{itemize}

Since the continuous CIR process is affine, a natural question arises: under which conditions on the jump size functions $(F_n)_{n\geq 1}$ does the extended CIR process remain affine in the semimartingale sense? To answer this, we proceed in two steps. First, we provide sufficient conditions that guarantee the affine property, illustrated with practical examples showing how to construct both upward and downward jumps while preserving affinity and respecting the state space. The first example consists of a process that jumps to zero and then adds a non-negative affine random variable. The second example is obtained by time-changing a continuous CIR process with a deterministic c\`adl\`ag  time change. In particular, we show that the affine property is preserved under such deterministic time changes. Finally, we derive necessary conditions, yielding a complete characterization of affine extended CIR processes.

\subsection{Sufficient conditions for the affine property}

It is reasonable to expect that, if the jump sizes are specified in an exponential–affine form, the affine structure of the classical CIR diffusion extends to its discontinuous counterpart.

Before proving this formally, we record a criterion that links an affine specification of the jump distribution to the admissible–jump condition in Definition~\ref{def:support}.

\begin{lemma}\label{lem:affine_supp}
Suppose that Assumption~\ref{ass:accumulation} holds and that the jump sizes \( (F_n)_{n \geq 1} \) have a conditional characteristic function of the form  
\begin{equation}\label{eq:affine_jump}
E \left[ e^{u F_n(X_{s_n-},Z_n)} \,\middle|\, \mathcal{F}_{s_n-} \right]
  = \exp\bigl( \gamma_{n,0}(u) + \gamma_{n,1}(u)\,X_{s_n-} \bigr),
  \qquad u\in i\mathbb{R},
\end{equation}
for some continuous functions \( \gamma_{n,0},\gamma_{n,1} : i\mathbb{R}\to\mathbb{C} \).  
Then the sequence $(F_n)_{n\geq 1}$ is admissible if and only if the following conditions hold for every \( n\ge 1 \):
\begin{enumerate}
  \item[(i)]  Both \( \gamma_{n,0} \) and \( \gamma_{n,1} \) admit analytic extensions to the left half‑plane \( \mathbb{C}_{-} \). Moreover, there exists a locally bounded map \( C : \mathbb{R}_{+}\to\mathbb{R}_{>0} \) such that
    \[
    \Re\left(\gamma_{n,0}(w)\right) \leq |w| \limsup_{x \to 0^+} C(x), \quad 
    \Re\left(\gamma_{n,1}(w)\right) \leq |w| \limsup_{x \to \infty} \frac{C(x)}{x},
    \]
    for all \( w \in \mathbb{C}_{-} \).
  \item[(ii)] The following asymptotic conditions are satisfied:
    \[
    \lim_{y\to\infty}\frac{\gamma_{n,0}(-y)}{y}\le 0,
    \quad
    \lim_{y\to\infty}\frac{\gamma_{n,1}(-y)}{y}\le 1 .
    \]
\end{enumerate}
\end{lemma}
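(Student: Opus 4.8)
The plan is to recast admissibility as a condition on the support of the conditional jump distribution, and then to characterize that support through the analytic behaviour of its Laplace transform on the left half-plane. Since $Z_n$ is uniform on $[0,1]$ and independent of $\mathcal{F}_{s_n-}$, the conditional law of $\xi_n = F_n(X_{s_n-},Z_n)$ given $X_{s_n-}=x$ is $p_n(x,\cdot) := \mathrm{Law}\bigl(F_n(x,U)\bigr)$ with $U\sim\mathrm{Unif}[0,1]$, and by \eqref{eq:affine_jump} its characteristic function is $v\mapsto\exp(\gamma_{n,0}(iv)+\gamma_{n,1}(iv)\,x)$, $v\in\R$. Since $\essinf_{z\in[0,1]}F_n(x,z) = \inf\supp p_n(x,\cdot)$, the first step reduces admissibility of $(F_n)_{n\geq1}$ to the requirement
\[
\supp p_n(x,\cdot)\subseteq[-x,\infty)\qquad\text{for all }x\in\R_+,\ n\geq1 .
\]

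The analytic input is the following dictionary for a probability measure $\rho$ on $\R$ with Laplace transform $L_\rho(w):=\int_\R e^{wt}\,\rho(dt)\in(0,\infty]$. First, if $\supp\rho\subseteq[-a,\infty)$ with $a\geq0$, then $L_\rho$ is finite and holomorphic on $\{\Re w<0\}$, satisfies $|L_\rho(w)|\leq e^{-\Re(w)a}\leq e^{|w|a}$ there, and $\tfrac1y\log L_\rho(-y)\to-\inf\supp\rho$ as $y\to\infty$. Conversely, if $L_\rho$ is finite on $(-\infty,0]$ and $\limsup_{y\to\infty}\tfrac1y\log L_\rho(-y)\leq a$, then $\supp\rho\subseteq[-a,\infty)$: otherwise $\rho\bigl((-\infty,-a-\varepsilon)\bigr)=\delta>0$ would force $L_\rho(-y)\geq\delta\,e^{y(a+\varepsilon)}$, contradicting the growth bound. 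Finally — and this is the key point — by the theory of analytic characteristic functions (Paley--Wiener--Schwartz type statements for measures), if the characteristic function of $\rho$ extends to a function holomorphic on $\{\Re w<0\}$, continuous up to $i\R$, and bounded there by $C\,e^{c|w|}$, then this extension coincides on $\{\Re w<0\}$ with $L_\rho$ and $\supp\rho\subseteq[-c,\infty)$.

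Granting this, both implications follow. For sufficiency, fix $n$ and $x$ and set $g_x:=\exp(\gamma_{n,0}+\gamma_{n,1}x)$; by (i) it is holomorphic on $\mathbb{C}_-$ with $|g_x(w)|\leq e^{|w|(c_0+c_\infty x)}$, where $c_0:=\limsup_{x\to0^+}C(x)$ and $c_\infty:=\limsup_{x\to\infty}C(x)/x$, and on $i\R$ it equals the characteristic function of $p_n(x,\cdot)$; the dictionary gives $g_x=L_{p_n(x,\cdot)}$ on $\{\Re w<0\}$, with $\supp p_n(x,\cdot)$ bounded below, so $\tfrac1y\log L_{p_n(x,\cdot)}(-y)\to-\inf\supp p_n(x,\cdot)$. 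Since $\tfrac1y\log L_{p_n(x,\cdot)}(-y)=\tfrac{\gamma_{n,0}(-y)}{y}+\tfrac{\gamma_{n,1}(-y)}{y}\,x$, condition (ii) forces the limit to be $\leq 0+1\cdot x=x$, whence $\inf\supp p_n(x,\cdot)\geq-x$: admissibility. For necessity, assume $\supp p_n(x,\cdot)\subseteq[-x,\infty)$ for all $x,n$. Then $L_{p_n(x,\cdot)}$ extends the characteristic function holomorphically to $\{\Re w<0\}$ with $|L_{p_n(x,\cdot)}(w)|\leq e^{|w|x}$; evaluating at $x=0$ isolates $\gamma_{n,0}$ and the affine slope in $x$ isolates $\gamma_{n,1}$ (the branch of the logarithm being pinned by positivity of $L_{p_n(x,\cdot)}$ on $(-\infty,0)$), yielding holomorphic extensions of $\gamma_{n,0},\gamma_{n,1}$ to $\mathbb{C}_-$ with $\Re\gamma_{n,0}(w)\leq0$ and $\Re\gamma_{n,1}(w)\leq|w|$, so that (i) holds with, e.g., $C(x)=1+x$. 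Finally $\tfrac1y\log L_{p_n(x,\cdot)}(-y)\to-\inf\supp p_n(x,\cdot)=:-a_n(x)$; subtracting this identity at two distinct values of $x$ shows $a_n$ is affine with $\lim_y\gamma_{n,1}(-y)/y$ equal to minus its slope, which is $\leq1$ because $a_n(x)\geq-x$, while evaluation at $x=0$ gives $\lim_y\gamma_{n,0}(-y)/y=-a_n(0)\leq0$; this is exactly (ii).

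The main obstacle is the Paley--Wiener step in the dictionary: a mere holomorphic extension of a characteristic function to the half-plane $\mathbb{C}_-$ does not localize $\supp\rho$ at all (Gaussian tails already violate this), so one must genuinely use the linear--exponential growth bound in (i) — which is of ``exponential type $1$'' flavour in the half-plane — to obtain the bilateral-Laplace-transform representation and, from it, a lower bound on $\supp p_n(x,\cdot)$; condition (ii) then sharpens that lower bound to the exact value $-x$. A secondary, more bookkeeping-type difficulty is the choice of a holomorphic logarithm defining $\gamma_{n,0},\gamma_{n,1}$ away from the imaginary axis (and hence the transfer of the exponential--affine form from $i\R$ to $\mathbb{C}_-$), together with the verification that \eqref{eq:affine_jump} may legitimately be read as an identity valid for every $x\in\R_+$.
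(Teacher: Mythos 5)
Your proposal is correct and follows essentially the same route as the paper: both reduce admissibility to the condition $\supp p_n(x,\cdot)\subseteq[-x,\infty)$ and then invoke Lukacs' characterization of distributions bounded from below (the paper's Corollary~\ref{cor:Lukacs}, your ``Paley--Wiener'' dictionary item) to translate this into an analytic extension with linear-exponential growth on $\mathbb{C}_-$ plus the limit formula for $\inf\supp$, separating $\gamma_{n,0}$ from $\gamma_{n,1}$ by varying $x$. The only (immaterial) difference is bookkeeping: you isolate the two coefficients by evaluating at $x=0$ and reading off the affine slope, while the paper takes suprema over $x\in[0,\epsilon)$ and $x\ge M$; your treatment of the converse is in fact somewhat more explicit than the paper's ``reverse the steps.''
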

\begin{proof}
Suppose that the sequence $(F_n)_{n\geq 1}$ is admissible. Then, by Theorem~\ref{th:existence}, there exists a unique non-negative strong solution \( X = (X_t)_{t \geq 0} \) to the SDE~\eqref{eq:CIR}.

Moreover, the admissibility condition in Definition~\ref{def:support} implies that
\[
\operatorname{Leb}\bigl(\{z \in [0,1] : F_n(x,z) < -x\}\bigr) = 0, 
\quad \text{for all } x \in \mathbb{R}_+,
\]
so that the distribution of \( F_n(x, Z_n) \) is supported on \( [-x, \infty) \) for every \( x \in \mathbb{R}_+ \).

It then follows from Corollary~\ref{cor:Lukacs} that the conditional characteristic function \eqref{eq:affine_jump} admits an analytic extension to the left half-plane \( \mathbb{C}_{-} \), and satisfies the exponential bound
\begin{equation}\label{eq:upper_bound}
\left| \exp\left( \gamma_{n,0}(w) + \gamma_{n,1}(w)\,x \right) \right| \leq e^{C(x) \lvert w \rvert}, \quad \text{for all } w \in \mathbb{C}_{-},\, x\in\R_+
\end{equation}
for some locally bounded function \( C: \mathbb{R}_+ \to \mathbb{R}_{>0} \) depending on the pre-jump state.

This implies that both \( \gamma_{n,0} \) and \( \gamma_{n,1} \) admit analytic extensions to \( \mathbb{C}_{-} \), and their real parts satisfy
\begin{equation}\label{eq:real_bound}
\Re\left(\gamma_{n,0}(w)\right) + \Re\left(\gamma_{n,1}(w)\right) x \leq C(x) \lvert w \rvert, \quad \text{for all } w \in \mathbb{C}_{-},\ x \in \mathbb{R}_+.
\end{equation}

Since the inequality holds for all \( x \in \mathbb{R}_+ \), we may fix \( \epsilon > 0 \) and restrict it to \( x \in [0, \epsilon) \), yielding
\[
\Re\left(\gamma_{n,0}(w)\right) \leq C(x) \lvert w \rvert - \Re\left(\gamma_{n,1}(w)\right) x, \quad \text{for all } w \in \mathbb{C}_{-} \text{ and } x \in [0, \epsilon).
\]

Taking the supremum over \( x \in [0, \epsilon) \), we obtain
\[
\Re\left(\gamma_{n,0}(w)\right) \leq \sup_{x\in[0,\epsilon)} \left\{ C(x) \lvert w \rvert - \Re\left(\gamma_{n,1}(w)\right) x \right\}, \quad \text{for all } w \in \mathbb{C}_{-},
\]
which yields the desired upper bound for \( \Re\left(\gamma_{n,0}(w)\right) \) in Condition~(i) of the lemma by letting \( \epsilon \to 0^+ \).

Similarly, from \eqref{eq:real_bound}, for any \( M > 0 \), we have
\[
\Re\left(\gamma_{n,1}(w)\right) \leq \frac{C(x)}{x} \lvert w \rvert - \frac{\Re\left(\gamma_{n,0}(w)\right)}{x}, \quad \text{for all } w \in \mathbb{C}_{-}, \, x \geq M. 
\]
Taking the supremum over \( x \geq M \) and letting \( M \to \infty \) then gives the upper bound for \( \Re\left(\gamma_{n,1}(w)\right) \) in Condition~(i).

To establish Condition~(ii), we apply the explicit formula for the infimum of a distribution bounded from below given in Corollary~\ref{cor:Lukacs}, which yields
\[
\inf\supp F_n(x,Z_n) = - \lim_{y \to \infty} \frac{\gamma_{n,0}(-y) + \gamma_{n,1}(-y)\,x}{y}, \quad \text{for all } x \in \mathbb{R}_+.
\]
Using the identity
\[
\inf\supp F_n(x,Z_n) = \essinf_{z \in [0,1]} F_n(x,z),
\]
and substituting the expression above into the condition from Assumption~\ref{def:support}, we obtain
\[
\lim_{y \to \infty} \frac{\gamma_{n,0}(-y)}{y} + x \left( \lim_{y \to \infty} \frac{\gamma_{n,1}(-y)}{y} - 1 \right) \leq 0, \quad \text{for all } x \in \mathbb{R}_+,
\]
which implies Condition~(ii) of the lemma by the same argument used to establish the upper bounds in Condition~(i).

The converse implication follows directly from the converse direction in Corollary~\ref{cor:Lukacs}, by reversing the steps of the preceding argument.
\end{proof}

\begin{remark}\label{rem:bochner}
In the statement of Lemma~\ref{lem:affine_supp}, it is implicitly assumed that Equation~\eqref{eq:affine_jump} defines a well-defined characteristic function. By Bochner’s theorem, this means that the function
\[
\varphi_n(u) \coloneqq \exp \left( \gamma_{n,0}(u) + \gamma_{n,1}(u) X_{s_n-} \right), \quad u \in i\mathbb{R},\; n \geq 1,
\]
must be positive definite, continuous at the origin, and normalized such that
\begin{equation}\label{eq:normalization}
\varphi_n(0) = 1.
\end{equation}
Continuity at the origin is automatically satisfied, as the functions \( \gamma_{n,0} \) and \( \gamma_{n,1} \) are assumed continuous on the entire domain. From the normalization condition~\eqref{eq:normalization}, it follows directly that
\[
\gamma_{n,0}(0) = \gamma_{n,1}(0) = 0.
\]

By Schoenberg’s theorem, a necessary and sufficient condition for \( \varphi_n \) to be positive definite is that the functions \( \gamma_{n,0} \) and \( \gamma_{n,1} \) are \emph{conditionally positive definite} (CPD). This requires that
\[
\gamma_{n,0}(u) = \overline{\gamma_{n,0}(-u)}, \quad \gamma_{n,1}(u) = \overline{\gamma_{n,1}(-u)} \quad \text{for all } u \in i\mathbb{R},
\]
and that for all \( u_1, \dots, u_m \in i\mathbb{R} \) and \( c_1, \dots, c_m \in \mathbb{C} \) with \( \sum_{i=1}^{m} c_i = 0 \), the following inequalities hold:
\[
\sum_{i,j=1}^{m} c_i \overline{c_j} \gamma_{n,0}(u_i - u_j) \geq 0, \quad \sum_{i,j=1}^{m} c_i \overline{c_j} \gamma_{n,1}(u_i - u_j) \geq 0.
\]
\end{remark}

We now state sufficient conditions guaranteeing the existence of a quasiregular affine semimartingale solution to the SDE~\eqref{eq:CIR}. 

\begin{theorem}\label{th:suff}
Suppose that Assumption~\ref{ass:accumulation} holds, and that the jump sizes \( (F_n)_{n\geq 1} \) admit a conditional characteristic function of the affine form~\eqref{eq:affine_jump}, where the functions \( \gamma_{n,0} \) and \( \gamma_{n,1} \) satisfy Conditions~(i)–(ii) in Lemma~\ref{lem:affine_supp}. Then there exists a unique quasi-regular, almost surely non-negative affine semimartingale \( X \) solving the SDE~\eqref{eq:CIR}.
\end{theorem}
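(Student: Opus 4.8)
The strategy is to obtain existence, uniqueness and non-negativity from the results already at hand, and then to establish the affine transform formula \eqref{eq:affine} by a backward recursion that alternates the classical CIR Riccati flow with the jump transforms $(\gamma_{n,0},\gamma_{n,1})$. First, since the $\gamma_{n,j}$ define a genuine characteristic function (cf.\ Remark~\ref{rem:bochner}) and satisfy Conditions~(i)--(ii), Lemma~\ref{lem:affine_supp} shows that $(F_n)_{n\ge1}$ is admissible; Theorem~\ref{th:existence} then provides a unique non-negative strong solution $X=X^x$ of \eqref{eq:CIR}. This $X$ is a semimartingale: on any compact time interval it has, by Assumption~\ref{ass:accumulation}, only finitely many jumps, so it decomposes into the absolutely continuous drift, the continuous local martingale $\sigma\int\sqrt{X}\,dW$, and a finite-variation pure-jump part. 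It remains to produce $\phi_t(T,u),\psi_t(T,u)$ for $u\in\cU=\C_-$ verifying \eqref{eq:affine} together with the quasiregularity conditions.

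Fix $T>0$ and $u\in\C_-$, and let $s_1<\dots<s_N$ be the jump times in $[0,T]$. I would define the deterministic scalar functions $\phi,\psi$ by backward recursion from $\tau:=T-t=0$: on each jump-free interval $\psi$ solves the CIR Riccati equation $\psi'=-\kappa\psi+\tfrac12\sigma^2\psi^2$ and $\phi'=\kappa\theta\psi$ (derivatives in $\tau$), while across a jump time $s_n$ one sets $\psi_{s_n-}=\psi_{s_n}+\gamma_{n,1}(\psi_{s_n})$ and $\phi_{s_n-}=\phi_{s_n}+\gamma_{n,0}(\psi_{s_n})$, starting from $\psi_T=u$, $\phi_T=0$. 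This recursion is well posed provided $\psi$ never leaves the domain $\C_-$ on which the CIR transform and the analytic extensions of $\gamma_{n,0},\gamma_{n,1}$ (Condition~(i)) are defined. Forward invariance of $\C_-$ under the Riccati flow is classical (on $i\R\setminus\{0\}$ the vector field $w\mapsto-\kappa w+\tfrac12\sigma^2w^2$ has non-positive real part; see \cite{DuffieFilipovicSchachermayer}). Invariance of $\C_-$ under the jump map $w\mapsto w+\gamma_{n,1}(w)$ is the one genuinely new point: since the conditional law of $F_n(x,Z_n)$ is supported on $[-x,\infty)$, the identity \eqref{eq:affine_jump} continues analytically to all $w\in\C_-$ (this is exactly what is used in the proof of Lemma~\ref{lem:affine_supp}), hence $|\exp(\gamma_{n,0}(w)+(w+\gamma_{n,1}(w))x)|=|E[e^{wX_{s_n}}\mid\cF_{s_n-}]|\le1$ for every $x\ge0$, and letting $x\to\infty$ forces $\Re(w+\gamma_{n,1}(w))\le0$.

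With $\psi_t(T,u)\in\C_-$ for all $t\in[0,T]$, I would prove \eqref{eq:affine} by backward induction over the interval endpoints. On a jump-free interval it is the classical affine identity for the continuous CIR diffusion. To cross a jump time $s_n$ backward, use the tower property $E[e^{uX_T}\mid\cF_{s_n-}]=E[\exp(\phi_{s_n}+\psi_{s_n}X_{s_n})\mid\cF_{s_n-}]$ and then condition on $\cF_{s_n-}$: since $Z_n$ is independent of $\cF_{s_n-}$, $X_{s_n-}$ is $\cF_{s_n-}$-measurable and $X_{s_n}=X_{s_n-}+F_n(X_{s_n-},Z_n)$, the analytically extended \eqref{eq:affine_jump} gives $E[e^{\psi_{s_n}X_{s_n}}\mid\cF_{s_n-}]=\exp(\gamma_{n,0}(\psi_{s_n})+(\psi_{s_n}+\gamma_{n,1}(\psi_{s_n}))X_{s_n-})$, which is precisely \eqref{eq:affine} at $t=s_n-$. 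All conditional expectations involved have modulus at most $1$ because $X\ge0$ and $\Re\psi_t\le0$, so no integrability question arises. This yields \eqref{eq:affine} for every $t\in[0,T]$; note also that $\psi_\cdot(T,0)\equiv0$ and $\phi_\cdot(T,0)\equiv0$ because $\gamma_{n,0}(0)=\gamma_{n,1}(0)=0$ (Remark~\ref{rem:bochner}).

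Finally, quasiregularity is read off from the construction. On $[0,T]$ there are finitely many jump times (Assumption~\ref{ass:accumulation}); on each subinterval $\phi,\psi$ are $C^1$ solutions of an ODE, and at each $s_n$ they have a single jump of size $(\gamma_{n,0}(\psi_{s_n}),\gamma_{n,1}(\psi_{s_n}))$, so $t\mapsto(\phi_t(T,u),\psi_t(T,u))$ is c\`adl\`ag and of finite variation on compacts, and c\`adl\`ag in $T$ as well. Continuity in $u$ on $\cU$, and continuity of the left limits $u\mapsto\phi_{t-}(T,u),\psi_{t-}(T,u)$, follow from continuous dependence of the Riccati solutions on initial data together with the continuity of the analytic extensions $\gamma_{n,0},\gamma_{n,1}$ on $\C_-$. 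Uniqueness of $X$ is part of Theorem~\ref{th:existence}, and $(\phi,\psi)$ are uniquely determined since the recursion is deterministic and \eqref{eq:affine} pins them down (for instance by differentiating in $x$ at an interior point of $D=\R_+$). I expect the invariance argument for the jump map — keeping the backward recursion inside the common domain of the CIR transform and of the analytically continued $\gamma_{n,j}$ — to be the crux; the remainder is bookkeeping with the tower property and elementary ODE theory.
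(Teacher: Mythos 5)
Your proof is correct and follows essentially the same route as the paper: backward recursion through the finitely many jump times in $[0,T]$, alternating the classical CIR Riccati flow on jump-free intervals with the jump updates $\psi_{s_n-}=\psi_{s_n}+\gamma_{n,1}(\psi_{s_n})$ and $\phi_{s_n-}=\phi_{s_n}+\gamma_{n,0}(\psi_{s_n})$, followed by the same verification of the normalization at $u=0$ and of quasiregularity. Your explicit check that $\C_{-}$ is invariant under the jump map (via $\lvert E[e^{wX_{s_n}}\mid\cF_{s_n-}]\rvert\le 1$ and letting $x\to\infty$) is a welcome addition, since the paper only appeals to the analyticity of $\gamma_{n,0},\gamma_{n,1}$ on $\C_{-}$ and to $\psi^c$ mapping $\C_{-}$ into itself without explicitly confirming that the recursion remains in the domain at each stage.
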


\begin{proof}
Assume that the jump sizes \( (F_n)_{n \geq 1} \) admit conditional characteristic functions of the affine form \eqref{eq:affine_jump}, where the functions \( \gamma_{n,0} \) and \( \gamma_{n,1} \) are continuous and satisfy Conditions~(i)--(ii) of Lemma~\ref{lem:affine_supp}. Then, by Theorem~\ref{th:existence} and Lemma~\ref{lem:affine_supp}, there exists a unique strong solution \( X \) to the SDE~\eqref{eq:CIR} with almost surely non-negative paths.

To verify that \( X \) satisfies the affine property \eqref{eq:affine}, fix an arbitrary time horizon \( T \geq 0 \), and let \( N \) be the counting function introduced in Proposition~\ref{prop:compensator}. By Assumption~\ref{ass:accumulation}, the number of jump times in any finite interval is finite, so in particular \( N(T) < \infty \). Denote by \( s_{N(T)} \) the last jump time prior to \( T \). On the interval \( [s_{N(T)}, T] \), the process \( X \) evolves without discontinuities. Hence, by the affine property of the continuous CIR process, we have
\[
E \!\left[ e^{u X_T} \mid \mathcal{F}_t \right] 
  = \exp\!\big( \phi^c_t(T, u) + \psi^c_t(T, u) X_t \big), 
  \qquad t \in [s_{N(T)}, T],
\]
where \( \phi^c_t(T, u) \) and \( \psi^c_t(T, u) \) are the Riccati coefficients of the classical CIR model (see \cite[Section~10.3.2.2]{filipovic2009term}).

Applying the law of total expectation and using the affine form \eqref{eq:affine_jump} at time \( s_{N(T)} \), we compute
\begin{align*}
E \left[ e^{uX_T} \mid \mathcal{F}_{s_{N(T)}-} \right] 
&= E \left[ E \left[ e^{uX_T} \mid \mathcal{F}_{s_{N(T)}} \right] \mid \mathcal{F}_{s_{N(T)}-} \right] \\
&= e^{\phi^c_{s_{N(T)}}(T,u)} \, E \left[ e^{\psi^c_{s_{N(T)}}(T,u) X_{s_{N(T)}}} \mid \mathcal{F}_{s_{N(T)}-} \right] \\
&= \exp\left( \phi_{s_{N(T)}-}(T,u) + \psi_{s_{N(T)}-}(T,u) X_{s_{N(T)}-} \right),
\end{align*}
where we define
\begin{align*}
\phi_{s_{N(T)}-}(T,u) &\coloneqq \phi^c_{s_{N(T)}}(T,u) + \gamma_{N(T),0}(\psi^c_{s_{N(T)}}(T,u)), \\
\psi_{s_{N(T)}-}(T,u) &\coloneqq \psi^c_{s_{N(T)}}(T,u) + \gamma_{N(T),1}(\psi^c_{s_{N(T)}}(T,u)). 
\end{align*}
These expressions are well-defined since \( u \mapsto \psi^c_{s_{N(T)}}(T,u) \) takes values in \( \mathbb{C}_{-} \) for \( u \in \mathbb{C}_{-} \) (see Lemma~2.7 in \cite{keller-ressel2019}), and Condition~(i) of Lemma~\ref{lem:affine_supp} ensures that \( \gamma_{N(T),0} \) and \( \gamma_{N(T),1} \) admit analytic extensions to \( \mathbb{C}_{-} \).

Now consider \( t \in [s_{N(T)-1}, s_{N(T)}) \). By the law of iterated expectations,  
\begin{align*}
E \!\left[ e^{uX_T} \mid \mathcal{F}_t \right] 
&= E \!\left[ E \!\left[ e^{uX_T} \mid \mathcal{F}_{s_{N(T)}-} \right] \,\middle|\, \mathcal{F}_t \right] \\
&= e^{\phi_{s_{N(T)}-}(T,u)} \, E \!\left[ e^{\psi_{s_{N(T)}-}(T,u) X_{s_{N(T)}-}} \mid \mathcal{F}_t \right] \\
&= e^{\phi_{s_{N(T)}-}(T,u)} \, e^{\phi^c_t(s_{N(T)}-, \psi_{s_{N(T)}-}(T,u)) 
          + \psi^c_t(s_{N(T)}-, \psi_{s_{N(T)}-}(T,u)) X_t},
\end{align*}
where the last step follows from the fact that no jumps occur on \( [s_{N(T)-1}, s_{N(T)}) \), so that the affine property of the classical CIR process applies.

Hence, for all \( t \in [s_{N(T)-1}, s_{N(T)}) \),
\[
E \!\left[ e^{uX_T} \mid \mathcal{F}_t \right] 
= \exp\!\left( \phi_t(T,u) + \psi_t(T,u) X_t \right),
\]
with
\begin{align*}
\phi_t(T,u) &\coloneqq \phi_{s_{N(T)}-}(T,u) 
              + \phi^c_t(s_{N(T)}-, \psi_{s_{N(T)}-}(T,u)), \\
\psi_t(T,u) &\coloneqq \psi^c_t(s_{N(T)}-, \psi_{s_{N(T)}-}(T,u)).
\end{align*}

Iterating this argument backward through the sequence of jump times 
\(s_n \leq s_{N(T)}\), we arrive at the affine representation
\[
E \!\left[ e^{u X_T} \mid \mathcal{F}_t \right] 
= \exp\!\left(\phi_t(T,u) + \psi_t(T,u)\,X_t\right), 
\qquad t \in [0,T], \; u \in \mathbb{C}_{-},
\]
where the functions \(\phi_t(T,u)\) and \(\psi_t(T,u)\) are defined recursively by
\begin{align*}
\phi_t(T,u) &= 
  \Ind_{\{N(t)=N(T)\}} \, \phi^c_t(T,u) \\
  &\quad + \Ind_{\{N(t)<N(T)\}} 
     \Big( \phi_{s_{N(t)+1}-}(T,u) 
           + \phi^c_t\!\left(s_{N(t)+1}-, \psi_{s_{N(t)+1}-}(T,u)\right) \Big), \\[0.5em]
\psi_t(T,u) &= 
  \Ind_{\{N(t)=N(T)\}} \, \psi^c_t(T,u) \\
  &\quad + \Ind_{\{N(t)<N(T)\}} 
     \psi^c_t\!\left(s_{N(t)+1}-, \psi_{s_{N(t)+1}-}(T,u)\right),
\end{align*}
with the jump updates specified as
\begin{align*}
\phi_{s_{N(t)+1}-}(T,u) &= \phi_{s_{N(t)+1}}(T,u) 
  + \gamma_{N(t)+1,0}\!\left(\psi_{s_{N(t)+1}}(T,u)\right), \\
\psi_{s_{N(t)+1}-}(T,u) &= \psi_{s_{N(t)+1}}(T,u) 
  + \gamma_{N(t)+1,1}\!\left(\psi_{s_{N(t)+1}}(T,u)\right).
\end{align*}

To verify that \( \phi_t(T,0) = \psi_t(T,0) = 0 \) for all \( t \in [0,T] \), we first note that the functions \( \phi^c \) and \( \psi^c \) satisfy the normalization condition  
\begin{equation}\label{eq:zero_cond}
\phi^c_t(s,0) = \psi^c_t(s,0) = 0, \quad \text{for all } s \geq 0,\ t \in [0,s],
\end{equation}
as they represent the Riccati coefficients of the continuous CIR process.

This directly implies that
\begin{align*}
\phi_t(T,0) &= \phi^c_t(T,0) = 0, \\
\psi_t(T,0) &= \psi^c_t(T,0) = 0,
\end{align*}
for all \( t \in [s_{N(T)}, T] \).

Now consider \( t \in [s_{N(T)-1}, s_{N(T)}) \). By Remark~\ref{rem:bochner}, we have 
\[
\gamma_{N(T),0}(0) = \gamma_{N(T),1}(0) = 0, 
\]
which, together with \eqref{eq:zero_cond}, yields
\begin{align*}
\phi_{s_{N(T)}-}(T,0) & = \phi^c_{s_{N(T)}}(T,0) + \gamma_{N(T),0}(\psi^c_{s_{N(T)}}(T,0)) = 0, \\
\psi_{s_{N(T)}-}(T,0) &= \psi^c_{s_{N(T)}}(T,0) + \gamma_{N(T),1}(\psi^c_{s_{N(T)}}(T,0)) = 0.
\end{align*}
Furthermore, using again \eqref{eq:zero_cond}, we obtain \( \phi^c_t(s_{N(T)}-, 0) = \psi^c_t(s_{N(T)}-, 0) = 0 \), so that
\begin{align*}
\phi_t(T,0) &= \phi_{s_{N(T)}-}(T,0) + \phi^c_t(s_{N(T)}-, \psi_{s_{N(T)}-}(T,0)) = 0, \\
\psi_t(T,0) &= \psi^c_t(s_{N(T)}-, \psi_{s_{N(T)}-}(T,0)) = 0,
\end{align*}
for all \( t \in [s_{N(T)-1}, s_{N(T)}) \).

Iterating this argument backward across all jump times \( s_n\leq s_{N(T)} \), we conclude that \( \phi_t(T,0) = \psi_t(T,0) = 0 \) for all \( t \in [0, T] \), thereby establishing the affine property of \( X \).

Finally, to verify the quasiregularity property, note that Condition~(i) of \cite[Definition~2.5]{keller-ressel2019} is satisfied by construction.

To establish Condition~(ii), observe that for all \(s \geq 0\) and \(t \in [0, s]\), the maps
\[
u \mapsto \phi^c_t(s,u), \quad u \mapsto \psi^c_t(s,u)
\]
are continuous on \(\C_{-}\), as they correspond to the characteristic exponents of the standard CIR process. Hence, \(\phi_{t-}(T,u)\) and \(\psi_{t-}(T,u)\) are continuous in \(u\) for all \(t \in (s_{N(T)}, T]\).

Moreover, by continuity of \(\gamma_{n,0}\) and \(\gamma_{n,1}\) for all \(n \geq 1\), the maps
\[
u \mapsto \phi_{s_{N(T)}-}(T,u), \quad u \mapsto \psi_{s_{N(T)}-}(T,u)
\]
are also continuous. Consequently, for all \(t \in (s_{N(T)-1}, s_{N(T)}]\), the maps
\[
u \mapsto \phi_{t-}(T,u), \quad u \mapsto \psi_{t-}(T,u)
\]
are continuous on \(\C_{-}\).

Iterating this argument backward over all jump times \(s_n \le s_{N(T)}\), we deduce that
\[
u \mapsto \phi_{t-}(T,u), \quad u \mapsto \psi_{t-}(T,u)
\]
are continuous for all \(t \in (0, T]\), thereby verifying Condition~(ii) and completing the proof.
\end{proof}

\begin{remark}
The backward recursion for \( \phi_t(T,u) \) and \( \psi_t(T,u) \) corresponds to a special case of the so-called semi-flow property of affine semimartingales (see \cite[Lemma~2.7]{keller-ressel2019}). In particular, the jump conditions we derive are consistent with those described in \cite[Theorem~3.2]{keller-ressel2019}.
\end{remark}

\subsection{Model instances}\label{sec:ex}

In the following, we propose two constructions of scheduled jumps that preserve the affine property, allow for both upward and downward movements, and ensure non-negativity of the process.

\begin{example}[Drop to zero, then up again]\label{ex:CIR_Drop_up}
Let \( F_n(x, z) = -x + f_n(x, z) \), where \( f_n : \mathbb{R}_+ \times [0,1] \to \mathbb{R}_+ \) is a measurable function such that
\[
E \left[ e^{u f_n(X_{s_n-}, Z_n)} \,\middle|\, \mathcal{F}_{s_n-} \right]
= \exp\bigl( A_n(u) + B_n(u)\,X_{s_n-} \bigr),
\qquad u \in \mathbb{C}_{-},
\]
for some continuous \( \mathbb{C} \)-valued functions \( A_n \) and \( B_n \). Then the jump sizes \( (F_n(X_{s_n-}, Z_n))_{n \geq 1} \) have a characteristic function of the affine form \eqref{eq:affine_jump}, with
\[
\gamma_{n,0}(u) = A_n(u), \qquad \gamma_{n,1}(u) = b_n(u) - u.
\]

Since \( f_n(x, z) \geq 0 \) for all \( (x,z) \in \mathbb{R}_+ \times [0,1] \), the post-jump value \( X_{s_n} = f_n(X_{s_n-}, Z_n) \) remains non-negative. In particular, an application of Corollary~\ref{cor:Lukacs} shows that \( a_n \) and \( b_n \) must satisfy Condition~(i) in Lemma~\ref{lem:affine_supp}, as well as the asymptotic conditions
\[
\lim_{y \to \infty} \frac{A_n(-y)}{y} \le 0,
\qquad
\lim_{y \to \infty} \frac{B_n(-y)}{y} \le 0.
\]

In other words, the jump mechanism is constructed by letting the process jump to zero and then adding a non-negative random variable --- possibly dependent on the pre-jump state --- thereby allowing both upward and downward jumps while preserving non-negativity.

A practical example arises by choosing
\[
f_n(x,z) = G_n^{-1}(z; x, \alpha_n, \beta_n, \lambda), \qquad \alpha_n, \lambda > 0, \, \beta_n \ge 0,
\]
where
\[
G_n(z; x, \alpha_n, \beta_n, \lambda) = P(\xi_n \leq z\mid X_{s_n-}=x), \quad \xi_n \sim \operatorname{Gamma}(\alpha_n + \beta_n X_{s_n-}, \lambda).
\]
This choice makes \( f_n(X_{s_n-}, Z_n) \) a Gamma-distributed random variable with shape parameter \( \alpha_n + \beta_n X_{s_n-} \) and rate \( \lambda \), whose conditional characteristic function is
\[
E \left[ e^{u f_n(X_{s_n-}, Z_n)} \,\middle|\, \mathcal{F}_{s_n-} \right]
= \exp\left( -\alpha_n \log\left(1 - \frac{u}{\lambda}\right) + \beta_n \log\left(1 - \frac{u}{\lambda}\right)\,X_{s_n-} \right),
\qquad u \in \mathbb{C}_{-}.
\]

Note that the complex logarithm is well-defined for all \( u \in \mathbb{C}_{-} \), since writing \( u = -a + ib \), with \( a \geq 0 \), yields
\[
\left| 1 - \frac{u}{\lambda} \right| = \sqrt{\left(1 + \frac{a}{\lambda}\right)^2 + \frac{b^2}{\lambda^2}} \geq 1.
\]

The case \( \beta_n = 0 \) corresponds to a Gamma-distributed post-jump value that is independent of the pre-jump state.

A simulation of a CIR process with stochastic discontinuities constructed in this manner is displayed in Figure~\ref{fig:CIR_Drop_up}.

\end{example}

\begin{figure}[t]
    \centering
    \includegraphics[width=0.9\textwidth]{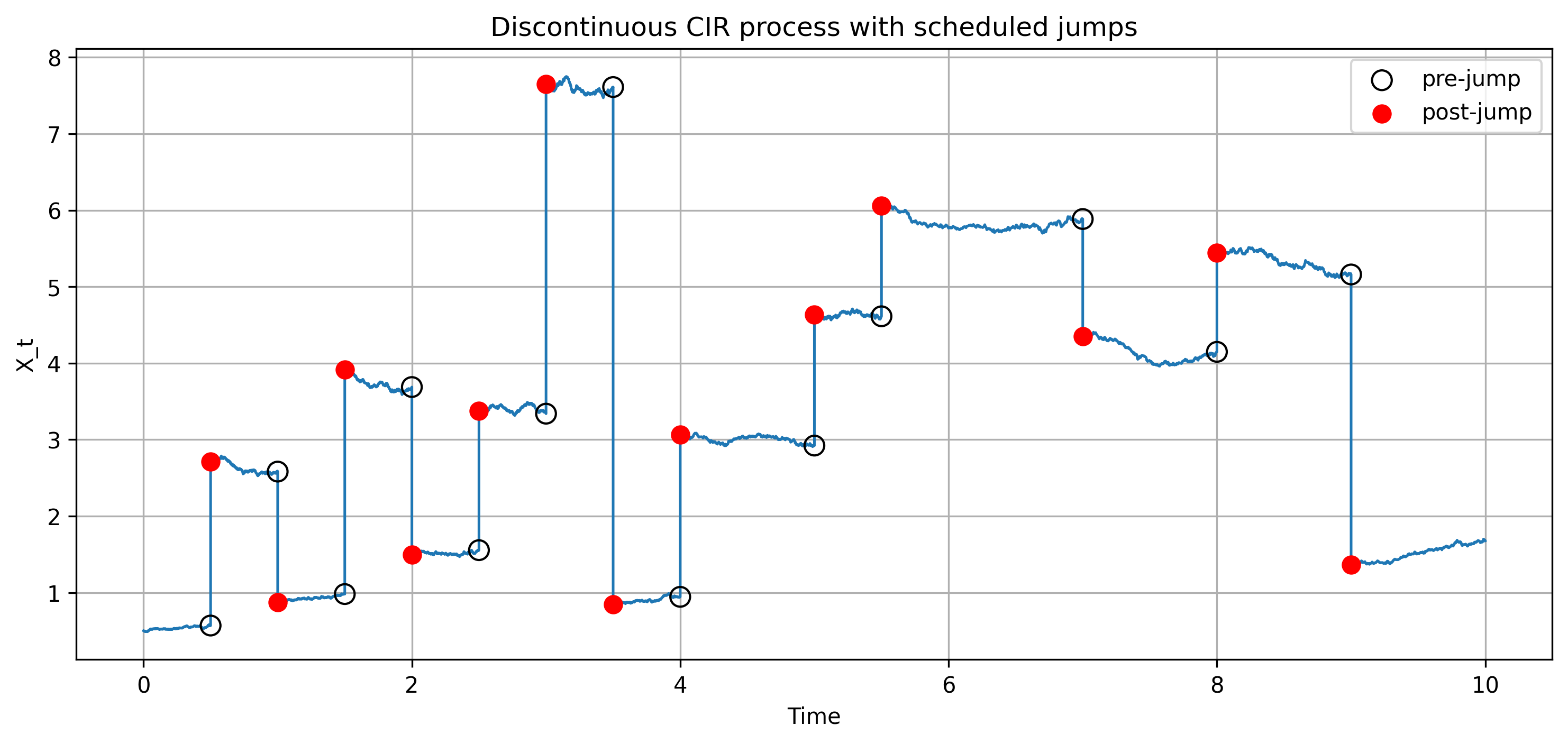}
    \caption{Simulation of a CIR process with stochastic discontinuities following Example~\ref{ex:CIR_Drop_up}, with mean-reversion speed $\kappa = 0.1$, long-term mean $\theta = 3.0$, and volatility $\sigma = 0.1$. The simulation includes $N = 13$ scheduled jumps. Post-jump values $(X_{s_n})_{n=1,\dots,13}$ are Gamma-distributed with shape $\alpha_n + \beta_n X_{s_n-}$: for $n = 1,\dots,10$, parameters are $\alpha_n = 3.0$, $\beta_n = 1.0$, while for $n = 11,12,13$ they are $\alpha_n = 3.5$, $\beta_n = 1.5$. The rate parameter is $\lambda = 1.0$.}
    \label{fig:CIR_Drop_up}
\end{figure}

To introduce the second example, we recall the notion of a time change (cf.~\cite{Jacod1979}). An $\bbF$-time change is a c\`adl\`ag, nondecreasing family $(\tau_t)_{t \geq 0}$ of $\bbF$-stopping times. It is said to be finite if each stopping time is almost surely finite. We denote by $\bbG$ the time-changed filtration, defined by 
\[
\cG_t \coloneqq \cF_{\tau_t}, \qquad t \geq 0.
\]

It is well known that the semimartingale property is preserved under time changes (see Corollary~10.12 in \cite{Jacod1979}). We now show that, under any deterministic time change, not only the semimartingale property but also the affine structure is preserved.  

\begin{proposition}\label{prop:tc_affine}
Let $Y$ be an affine semimartingale taking values in $D\subseteq\R^d$, and let $\tau : \R_+ \to \R_+$ be a finite, deterministic time change. Then the process $X = (X_t)_{t \geq 0}$ defined by 
\[
X_t \coloneqq Y_{\tau(t)}, \qquad t \geq 0,
\]
is an affine semimartingale with respect to the time-changed filtration $\bbG$.
\end{proposition}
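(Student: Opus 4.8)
The plan is to check the two defining ingredients of an affine semimartingale for $X=(Y_{\tau(t)})_{t\ge 0}$ directly, using that a \emph{deterministic}, nondecreasing time change commutes cleanly with the conditioning in \eqref{eq:affine}. First I would dispose of the structural points. Since $\tau$ is càdlàg, nondecreasing and finite, $\cG_t=\cF_{\tau(t)}$ is again a filtration (satisfying the usual conditions by right-continuity of $\tau$); the process $X$ is $\bbG$-adapted because $Y$ is càdlàg and $\bbF$-adapted and each $\tau(t)$ is a (constant, hence trivial) $\bbF$-stopping time, so $X_t=Y_{\tau(t)}$ is $\cF_{\tau(t)}$-measurable; the paths of $X$ are càdlàg, being the composition of the càdlàg path of $Y$ with the càdlàg nondecreasing map $\tau$; and $X$ takes values in $D$ since $Y$ does. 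The $\bbG$-semimartingale property of $X$ is then exactly Corollary~10.12 in \cite{Jacod1979}.

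\textbf{Core computation.} Fix $0\le t\le T$ and $u\in\cU$. Because $\tau$ is deterministic and nondecreasing we have $\tau(t)\le\tau(T)$, so the affine property \eqref{eq:affine} of $Y$ may be applied with the fixed time pair $(\tau(t),\tau(T))$, which gives
\begin{align*}
E\!\left[e^{\langle u, X_T\rangle}\mid \cG_t\right]
&= E\!\left[e^{\langle u, Y_{\tau(T)}\rangle}\mid \cF_{\tau(t)}\right] \\
&= \exp\!\big(\phi_{\tau(t)}(\tau(T),u)+\langle\psi_{\tau(t)}(\tau(T),u),Y_{\tau(t)}\rangle\big) \\
&= \exp\!\big(\phi_{\tau(t)}(\tau(T),u)+\langle\psi_{\tau(t)}(\tau(T),u),X_t\rangle\big).
\end{align*}
Hence $X$ satisfies \eqref{eq:affine} with $\phi^X_t(T,u):=\phi_{\tau(t)}(\tau(T),u)$ and $\psi^X_t(T,u):=\psi_{\tau(t)}(\tau(T),u)$. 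The remaining requirements are immediate: $u\mapsto\phi^X_t(T,u)$ and $u\mapsto\psi^X_t(T,u)$ are continuous on $\cU$ since they coincide with $\phi_{\tau(t)}(\tau(T),\cdot)$ and $\psi_{\tau(t)}(\tau(T),\cdot)$, which are continuous by the assumption on $Y$; and $\phi^X_t(T,0)=\psi^X_t(T,0)=0$ by the corresponding normalization for $Y$. This establishes that $X$ is an affine semimartingale with respect to $\bbG$.

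\textbf{Main obstacle.} I do not expect a genuine obstacle here: the statement is essentially a change-of-variables observation, the substitutions above being legitimate precisely because $\tau$ is deterministic (so $\tau(t),\tau(T)$ are constants one may plug into the already-established affine formula) and nondecreasing (so the ordering $t\le T$ is preserved); for a random time change one would instead face an optional-sampling argument and affinity would fail in general. The only points requiring a word of care are the adaptedness/measurability bookkeeping in the first paragraph and — if one additionally wishes to transfer \emph{quasiregularity} from $Y$ to $X$ (relevant for the time-change construction of the extended CIR process) — checking that $t\mapsto\phi_{\tau(t)}(\tau(T),u)$ and $t\mapsto\psi_{\tau(t)}(\tau(T),u)$ inherit càdlàg paths and finite variation (composition with a nondecreasing càdlàg function preserves both) and that their left limits remain continuous in $u$, the latter following from quasiregularity of $Y$ together with $\tau(t-)\le\tau(t)$. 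This could be recorded as a short complement but is not needed for the statement as given.
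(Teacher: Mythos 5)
Your proof is correct and follows essentially the same route as the paper: the core step in both is to apply the affine property of $Y$ at the deterministic pair $(\tau(t),\tau(T))$ and identify $E[e^{\langle u, X_T\rangle}\mid \cG_t]$ with $\exp(\phi_{\tau(t)}(\tau(T),u)+\langle\psi_{\tau(t)}(\tau(T),u),X_t\rangle)$, the semimartingale property being delegated to Corollary~10.12 of \cite{Jacod1979} exactly as the paper does. Your additional remarks on adaptedness, the normalization at $u=0$, and the transfer of quasiregularity are sensible bookkeeping that the paper leaves implicit.
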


\begin{proof}
We verify directly that $X$ is affine by computing its conditional characteristic function.  
For all $0 \leq t \leq T$ and $u \in \mathcal{U}$, we have
\begin{align*}
E \!\left[ e^{\langle u, X_T \rangle} \mid \mathcal{G}_t \right] = E \!\left[ e^{\langle u, Y_{\tau(T)} \rangle} \mid \mathcal{F}_{\tau(t)} \right] = \exp\!\left( \phi_{\tau(t)}(\tau(T), u) + \langle \psi_{\tau(t)}(\tau(T), u), X_t \rangle \right),
\end{align*}
where the last equality follows from the affine property of $Y$. This shows that the time-changed semimartingale $X$ inherits the affine structure with respect to $\bbG$, with characteristic exponents given by
\[
\phi^{\tau}_t(T,u) \coloneqq \phi_{\tau(t)}(\tau(T), u),
\qquad 
\psi^{\tau}_t(T,u) \coloneqq \psi_{\tau(t)}(\tau(T), u),
\]
for all $0 \leq t \leq T$ and $u \in \mathcal{U}$.
\end{proof}

In the next proposition, we illustrate how an extended CIR process with stochastic discontinuities can be constructed by applying a deterministic time change to a standard CIR process. 

\begin{proposition}[Time-changed CIR process]\label{prop:tc_CIR}
Let $Y$ be a continuous CIR process satisfying
\[
dY_t = \kappa (\theta - Y_t) \, dt + \sigma \sqrt{Y_t} \, dW_t, \quad t \ge 0.
\]
Let $\tau : \mathbb{R}_+ \to \mathbb{R}_+$ be a finite, deterministic time change defined by
\[
\tau(t) \coloneqq t + H(t), \qquad 
H(t) \coloneqq \sum_{n:\, s_n \le t} \Delta_n, \quad \Delta_n \ge 0.
\]
Define the time–changed CIR process $X=(X_t)_{t\ge 0}$ by
\[
X_t \coloneqq Y_{\tau(t)}.
\]

Then $X$ is a non-negative affine semimartingale with respect to the time-changed filtration $\mathbb{G}$, satisfying the SDE~\eqref{eq:CIR}, with jump sizes
\[
F_n(x,z) = G_n^{-1}(x,z),
\]
where $G_n(x,y)$ denotes the CDF of a shifted non-central chi-squared random variable $\xi_n$ with conditional density
\[
f_{\xi_n \mid X_{s_n-}=x}(\xi) 
= \frac{1}{c_n}\, f_{\chi^{'2}_{\nu,\lambda_n}}\!\Bigl(\frac{\xi + x}{c_n}\Bigr), 
\qquad \xi > -x,
\]
and
\begin{equation}\label{eq:c_n}
c_n = \frac{\sigma^2 \bigl(1 - e^{-\kappa \Delta_n}\bigr)}{4\kappa},
\end{equation}
with degrees of freedom $\nu$ and non-centrality parameter $\lambda_n$ given by
\begin{equation}\label{eq:lamnda_nu}
\nu = \frac{4 \kappa \theta}{\sigma^2},\quad
\lambda_n = \frac{e^{-\kappa \Delta_n} x}{c_n}.
\end{equation}
\end{proposition}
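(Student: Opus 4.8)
The plan is to obtain the affine property from Proposition~\ref{prop:tc_affine} and to verify the SDE~\eqref{eq:CIR} by unfolding the path decomposition of the continuous CIR process $Y$, treating the jumps of $\tau$ by hand.

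\emph{Step 1 ($\tau$ is an admissible time change; $X$ is a non-negative affine semimartingale).} The map $H$ is a right-continuous, nondecreasing step function which, by Assumption~\ref{ass:accumulation}, takes finite values on compact sets; hence $\tau(t)=t+H(t)$ is càdlàg, strictly increasing and finite, i.e.\ a finite deterministic time change in the sense recalled above. The continuous CIR process $Y$ is an affine semimartingale with state space $D=\R_+$ on which it remains non-negative a.s.; consequently $X_t=Y_{\tau(t)}\ge 0$ for all $t$ a.s., and Proposition~\ref{prop:tc_affine} shows that $X$ is an affine semimartingale for the time-changed filtration $\bbG$, with exponents $\phi^{\tau}_t(T,u)=\phi_{\tau(t)}(\tau(T),u)$ and $\psi^{\tau}_t(T,u)=\psi_{\tau(t)}(\tau(T),u)$.

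\emph{Step 2 ($X$ solves \eqref{eq:CIR}).} Write $Y_t=x+\int_0^t\kappa(\theta-Y_s)\,ds+M_t$ with $M_t=\int_0^t\sigma\sqrt{Y_s}\,dW_s$ and $[M]_t=\int_0^t\sigma^2 Y_s\,ds$. Since $H$ is constant on each $[s_n,s_{n+1})$ and $\tau'\equiv 1$ a.e., splitting $[0,\tau(t)]$ into the image of $\tau|_{[0,t]}$ and the skipped intervals $(\tau(s_n-),\tau(s_n))$ gives
\[
\int_0^{\tau(t)}\kappa(\theta-Y_s)\,ds=\int_0^t\kappa(\theta-X_r)\,dr+\sum_{n:\,s_n\le t}\int_{\tau(s_n-)}^{\tau(s_n)}\kappa(\theta-Y_s)\,ds .
\]
The time-changed martingale $M\circ\tau$ is a $\bbG$-local martingale (time change of a local martingale by finite stopping times; cf.\ \cite{Jacod1979}) which is continuous except at the $s_n$, where $\Delta(M\circ\tau)_{s_n}=M_{\tau(s_n)}-M_{\tau(s_n-)}$; its continuous part $N:=M\circ\tau-\sum_{n:\,s_n\le\cdot}\Delta(M\circ\tau)_{s_n}$ is thus a continuous $\bbG$-local martingale, and evaluating it on each interval $[s_n,s_{n+1})$, where $\tau$ acts as a translation, yields $[N]_t=\int_0^t\sigma^2 X_s\,ds$. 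Because $\{s:X_s=0\}$ is Lebesgue-null a.s., the process $W^{\bbG}_t:=\int_0^t\Ind_{\{X_s>0\}}(\sigma\sqrt{X_s})^{-1}\,dN_s$ is, by Lévy's characterisation, a $\bbG$-Brownian motion with $N_t=\int_0^t\sigma\sqrt{X_s}\,dW^{\bbG}_s$, exactly as in the classical construction of the CIR diffusion. Finally, since $Y$ is continuous,
\[
M_{\tau(s_n)}-M_{\tau(s_n-)}+\int_{\tau(s_n-)}^{\tau(s_n)}\kappa(\theta-Y_s)\,ds=Y_{\tau(s_n)}-Y_{\tau(s_n-)}=\Delta X_{s_n},
\]
so collecting the drift, the continuous martingale $N$ and the jump terms gives $X_t=x+\int_0^t\kappa(\theta-X_s)\,ds+\int_0^t\sigma\sqrt{X_s}\,dW^{\bbG}_s+J_t$ with $J_t=\sum_{n:\,s_n\le t}\Delta X_{s_n}$, which is \eqref{eq:CIR}.

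\emph{Step 3 (identification of the jump law).} Since $\tau(s_n-)$ is deterministic, $\mathcal{G}_{s_n-}=\mathcal{F}_{\tau(s_n-)}$, and $Y$ is a time-homogeneous Markov process, the conditional law of $\Delta X_{s_n}=Y_{\tau(s_n-)+\Delta_n}-Y_{\tau(s_n-)}$ given $\mathcal{G}_{s_n-}$ depends only on $X_{s_n-}=Y_{\tau(s_n-)}$. Inserting the pre-jump value $x$ into the classical CIR transition law over a span $\Delta_n$, namely $Y_{a+\Delta_n}\mid\{Y_a=x\}\sim c_n\,\chi^{'2}_{\nu,\lambda_n}$ with $c_n,\nu,\lambda_n$ as in \eqref{eq:c_n}--\eqref{eq:lamnda_nu} (see, e.g., \cite{Alfonsi2015Affine}), and subtracting $x$, one obtains that $\xi_n$ has precisely the shifted non-central chi-squared density stated. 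Writing $G_n(x,\cdot)$ for the associated conditional distribution function and invoking the quantile representation recalled after Definition~\ref{def:CIR}, we get $F_n(x,z)=G_n^{-1}(x,z)$, which completes the proof. The delicate point is Step~2: one must argue carefully that the jumps of $M\circ\tau$ at the $s_n$ recombine with the matching drift increments of $Y$ into bona fide jumps of $X$, and that the leftover continuous martingale part $N$ carries quadratic variation $\int_0^{\cdot}\sigma^2 X_s\,ds$ --- note that $[N]$ is \emph{not} $[M]_{\tau(\cdot)}$, which fails to be continuous because $\tau$ jumps --- so that the recovered $W^{\bbG}$ genuinely drives the diffusive part of $X$.
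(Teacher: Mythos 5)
Your proof is correct and reaches the same conclusion by the same overall strategy as the paper --- write $X_t=Y_{\tau(t)}$ in integral form, split the drift over the image of $\tau$ and the inserted intervals, identify the jump $\Delta X_{s_n}$ with the increment $Y_{t_n+\Delta_n}-Y_{t_n}$ of the original CIR process, and read off the shifted non-central chi-squared law from the CIR transition density --- but it differs meaningfully in how the martingale part is handled. The paper invokes the substitution formula of \cite{Jacod1979} and then decomposes the time-changed Brownian motion pathwise as $W_{\tau(t)}=W_t+U_t$, identifying $U$ with a sum of independent Gaussian increments $\eta_n\sim\cN(0,\Delta_n)$ attached to the jump times, so that the original $W$ continues to drive the diffusive part. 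You instead isolate the purely discontinuous part of $M\circ\tau$ (whose jumps at the deterministic times $s_n$ have vanishing $\cG_{s_n-}$-conditional mean, so the remainder $N$ is a continuous $\bbG$-local martingale), compute $[N]_t=\int_0^t\sigma^2X_s\,ds$ interval by interval, and recover a fresh $\bbG$-Brownian motion by L\'evy's characterisation. Your route is arguably the more robust one: it sidesteps the delicate claim that $U$ is a Gaussian process independent of $W$ and the fact that the increments $W_{t+H(s_n)}-W_{t+H(s_n)-\Delta_n}$ in the paper's telescoping depend on $t$, and your remark that $[N]\neq[M]_{\tau(\cdot)}$ correctly flags the point where a naive time-change of quadratic variations would go wrong. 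You also make explicit two things the paper leaves implicit: the appeal to Proposition~\ref{prop:tc_affine} for the affine property of $X$ under $\bbG$, and the fact that $\{s:X_s=0\}$ is Lebesgue-null so that L\'evy's characterisation applies. The only caveat, shared equally by the paper's own argument, is that the final identification $\xi_n=F_n(X_{s_n-},Z_n)$ via the generalized inverse is a distributional (weak) identification rather than a pathwise one with pre-specified $Z_n$; you are no less rigorous than the paper on this point.
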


\begin{proof}
By definition, the time–changed CIR process \(X=(X_t)_{t \geq 0}\) satisfies the following SDE in integral form:
\[
X_t 
= Y_0 + \int_{0}^{\tau(t)} \kappa \bigl(\theta - Y_s\bigr)\, ds 
      + \int_{0}^{\tau(t)} \sigma \sqrt{Y_s}\, dW_s .
\]
Applying the substitution formula for time changes (Proposition~10.21 in \cite{Jacod1979}), this expression can be rewritten as
\begin{equation}\label{eq:CIR_tc}
X_t 
= X_0 + \int_{0}^{t} \kappa \bigl(\theta - X_s\bigr)\, d\tau(s) 
       + \int_{0}^{t} \sigma \sqrt{X_s}\, dW_{\tau(s)} .
\end{equation}
The first integral represents the drift accumulated under the new clock, whose differential is given by
\begin{equation}\label{eq:clock}
d\tau(s) = ds + dH(s).
\end{equation}

The noise term in \eqref{eq:CIR_tc} is a stochastic integral with respect to the time–changed Brownian motion \(W_{\tau(\cdot)}\), which can be decomposed as
\[
W_{\tau(t)} = W_t + U_t ,
\]
where \(U=(U_t)_{t\geq 0}\) is a Gaussian process independent of \(W\), defined by 
\[
U_t \coloneqq W_{\tau(t)} - W_t \sim \cN\bigl(0, H(t)\bigr), 
\qquad t \geq 0.
\]
Using the independent increment property of Brownian motion, we can further decompose \(U_t\) as
\[
U_t = \sum_{n:\, s_n \leq t} \bigl(W_{t+H(s_n)} - W_{t+H(s_n)-\Delta_n}\bigr), 
\qquad t \geq 0.
\]
Hence, the time–changed Brownian motion admits the representation
\[
W_{\tau(t)} = W_t + \sum_{n:\, s_n \leq t} \eta_n ,
\]
where \((\eta_n)_{n \geq 1}\) is a sequence of independent Gaussian random variables with
\[
\eta_n \sim \cN(0,\Delta_n).
\]
In particular, \(W_{\tau(\cdot)}\) is a semimartingale with continuous martingale part \(W\), and deterministic jump times \(s_n\) carrying independent Gaussian jumps of mean zero and variance \(\Delta_n\).

By substituting the decomposition of \(W_{\tau(\cdot)}\) together with \eqref{eq:clock} into \eqref{eq:CIR_tc}, we obtain that, under the time–changed filtration \(\mathbb{G}\), the process \(X\) satisfies the stochastic dynamics
\[
X_t 
= X_0 + \int_{0}^{t} \kappa \bigl(\theta - X_s\bigr)\, ds 
       + \int_{0}^{t} \sigma \sqrt{X_s}\, dW_s
       + \sum_{n:\, s_n \leq t} \xi_n ,
\]
where the jump sizes \((\xi_n)_{n \geq 1}\) are given by
\[
\xi_n = \int_{0}^{t} \Big( \kappa(\theta - X_s)\Delta_n 
              + \sigma \sqrt{X_s}\,\eta_n \Big)\, \delta_{s_n}(ds).
\]
Equivalently, the jump sizes can be expressed as
\[
\xi_n = X_{s_n} - X_{s_n-} 
      = Y_{\tau(s_n)} - Y_{\tau(s_n-)} 
      = Y_{t_n + \Delta_n} - Y_{t_n},
\]
where
\[
t_n \coloneqq \tau(s_n-) = s_n + H(s_n) - \Delta_n .
\]
In particular, for any measurable set \(A \in \cB(\R)\),
\[
P\!\left(\xi_n \in A \mid \mathcal{G}_{s_n-}\right) 
= P\!\left(Y_{t_n+\Delta_n} - Y_{t_n} \in A \mid \mathcal{F}_{t_n}\right)= P\!\left(Y_{t_n+\Delta_n} - Y_{t_n} \in A \mid Y_{t_n}\right).
\]
By the distributional properties of the CIR process (see, e.g., \cite[Proposition~1.2.4]{Alfonsi2015Affine}), the conditional distribution of \(\xi_n\) given \(X_{s_n-} = Y_{t_n} = x\) coincides with that of
\[
c_n V_n - x,
\]
where \(c_n\) is defined in \eqref{eq:c_n}, and \(V_n\) is a non-central chi–squared random variable with parameters \(\nu\) and \(\lambda_n\) as in \eqref{eq:lamnda_nu}.

By choosing \(F_n\) as in the statement, it follows that \(\xi_n = F_n(X_{s_n},Z_n)\), so that \(X\) is an extended CIR process satisfying~\eqref{eq:CIR}. In particular, the conditional support of \(\xi_n\) given the pre-jump state \(X_{s_n-}=x\) is \([-x, \infty)\), which guarantees that the admissibility condition in Definition~\ref{def:support} is fulfilled.
\end{proof}

\begin{remark}
It is worth noting that a time–changed CIR process is an extended CIR process whose jump sizes coincide with the increments of the original CIR process. Consequently, the jump sizes are autocorrelated, since the mean–reverting nature of the CIR process induces dependence between its increments. 

In particular, using the same notation as in Proposition~\ref{prop:cov_jump}, we obtain
\begin{align*}
c(n,m) \;&=\; \cov\!\left(Y_{t_n+\Delta_n} - Y_{t_n}, \, Y_{t_m+\Delta_m} - Y_{t_m}\right) ) \\[0.5em]
&=\; e^{\kappa\abs{t_n+\Delta_n - t_m - \Delta_m}} 
   \, \var\!\left(Y_{(t_n+\Delta_n)\wedge (t_m+\Delta_m)}\right) - e^{\kappa\abs{t_n+\Delta_n - t_m}}
   \, \var\!\left(Y_{(t_n+\Delta_n)\wedge t_m}\right) \\[0.5em]
&\quad - e^{\kappa\abs{t_m+\Delta_m - t_n}}
   \, \var\!\left(Y_{(t_m+\Delta_m)\wedge t_n}\right) + e^{\kappa\abs{t_n - t_m}}
   \, \var\!\left(Y_{t_n\wedge t_m}\right),
\end{align*}
where the second equality follows from linearity of covariance together with the covariance structure of the CIR process. The explicit form of the variances is known and can then be computed using the formula for the variance of the non–central chi–squared distribution.
\end{remark}

A direct consequence of Proposition~\ref{prop:tc_CIR} is that an extended CIR process has jump sizes with conditional characteristic function
\[
\mathbb{E} \!\left[ e^{u F_n(X_{s_n-},Z_n)} \,\middle|\, \mathcal{G}_{s_n-} \right]
= (1-2uc_n)^{-\frac{\nu}{2}}
   \exp\!\left(
      \frac{u e^{-\kappa \Delta_n} X_{s_n-}}{1-2uc_n} - u X_{s_n-}
   \right),
\]
which is of the affine form given in~\eqref{eq:affine_jump}, with
\begin{align*}
\gamma_{n,0}(u) &= -\tfrac{\nu}{2}\log(1-2uc_n), \\
\gamma_{n,1}(u) &= u \left( \frac{e^{-\kappa \Delta_n}}{1-2uc_n} - 1 \right).
\end{align*}

Notably, we observe the following asymptotic behavior:  
\begin{align*}
\gamma_{n,0}(u) &\to 0, \quad \gamma_{n,1}(u) \to 0, \quad \text{as } \Delta_n \to 0,\\
\gamma_{n,0}(u) &\to -\tfrac{\nu}{2} \log\!\left(1 - u \frac{\sigma^2}{2\kappa}\right), \quad 
\gamma_{n,1}(u) \to -u, \quad \text{as } \Delta_n \to \infty.
\end{align*}
In particular, as $\Delta_n \to 0$, the corresponding jump size $\xi_n$ converges to $0$ in distribution, implying that the post-jump values $X_{s_n}$ approach the pre-jump states $X_{s_n-}$. Conversely, as $\Delta_n \to \infty$, the conditional characteristic function of the post-jump values satisfies
\begin{align*}
\lim_{\Delta_n \to \infty} E \left[ e^{u X_{s_n}} \,\middle|\, \mathcal{G}_{s_n-} \right]
= \lim_{\Delta_n \to \infty} e^{u X_{s_n-}} \, E \left[ e^{u \xi_n} \,\middle|\, \mathcal{G}_{s_n-} \right]
= \left(1 - u \frac{\sigma^2}{2\kappa}\right)^{-\frac{\nu}{2}}.
\end{align*}
Hence, as $\Delta_n \to \infty$, the distribution of $X_{s_n}$ converges to a Gamma distribution with parameters 
\(\alpha = \frac{2\kappa \theta}{\sigma^2}\) and \(\lambda = \frac{2\kappa}{\sigma^2}\), corresponding to the stationary distribution of the CIR process.

A simulation of a CIR process with stochastic discontinuities constructed via time change is shown in Figure~\ref{fig:CIR_Drop_up}.

\begin{figure}[t]
    \centering
    \includegraphics[width=0.9\textwidth]{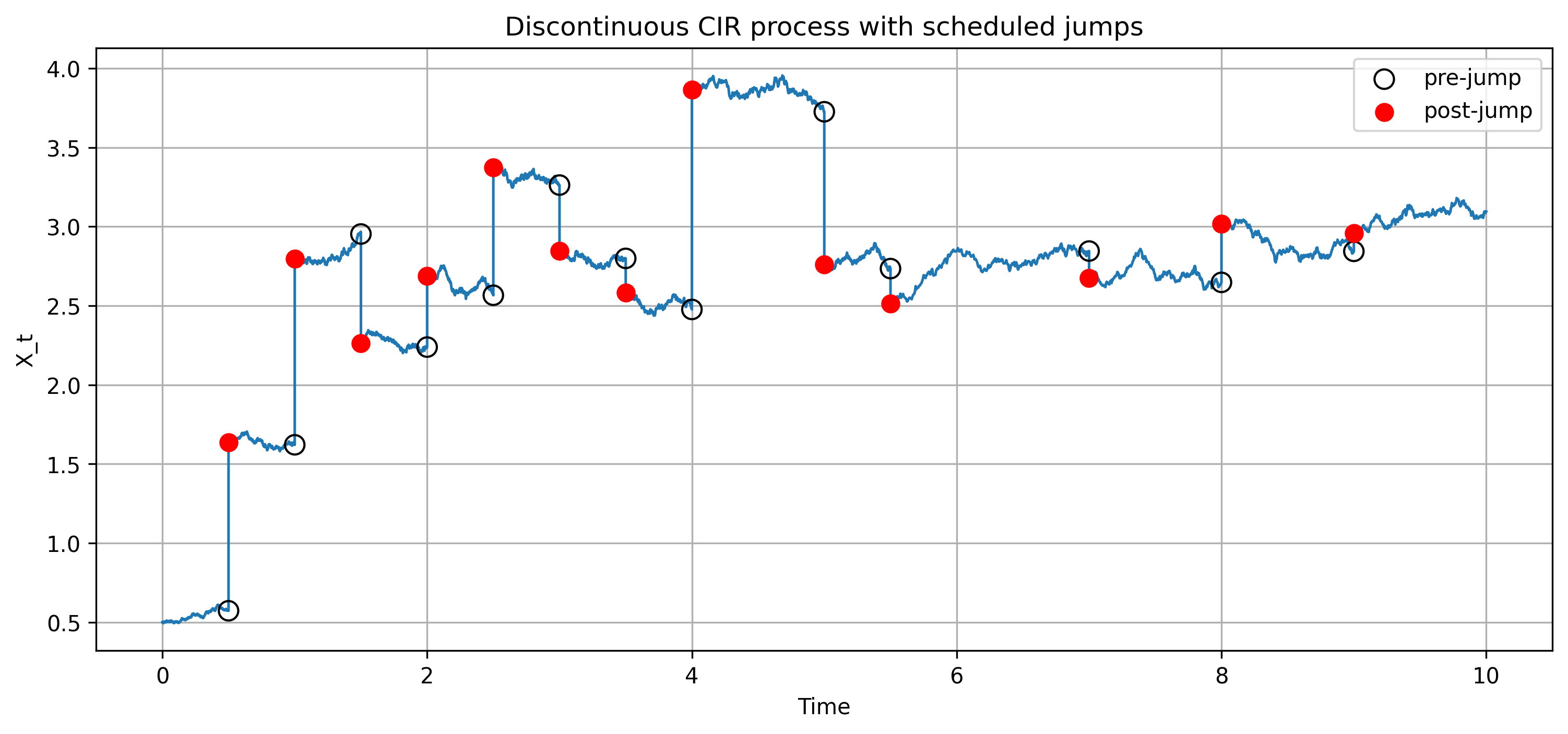}
    \caption{Simulation of an extended CIR process obtained via time change, with mean-reversion rate $\kappa = 0.1$, long-term mean $\theta = 3.0$, and volatility $\sigma = 0.1$. The simulation includes $N = 13$ scheduled jumps. The time-change induces jumps such that the post-jump values $(X_{s_n})_{n=1,\dots,13}$ are given by $X_{s_n} = c_n V_n$, where $c_n$ is defined in Equation~\eqref{eq:c_n} and $V_n$ is non-central chi-squared distributed with degrees of freedom and non-centrality parameter given by Equation~\eqref{eq:lamnda_nu}. The length of the time-change shifts $\Delta_n$ is $15$ for $n=1,\dots,5$, $20$ for $n=6,\dots,10$, and $25$ for $n=11,12,13$.}
    \label{fig:CIR_tc}
\end{figure}

\subsection{Necessary conditions for the affine property}

In Theorem~\ref{th:suff}, we established sufficient conditions for the extended CIR process to be a quasiregular affine semimartingale. We now extend this result by incorporating necessary conditions as well. To this end, recall that an affine semimartingale \(X=(X_t)_{t\geq 0}\) taking values in a closed convex cone of full dimension \(D \subseteq \R^d\) is said to have \emph{support of full convex span} if 
\begin{equation}\label{eq:full_conv}
\operatorname{conv}(\operatorname{supp}(X_t)) = D, \quad \text{for all } t \geq 0.
\end{equation}
This condition is minimal and underlies most structural properties of affine semimartingales (see \cite{keller-ressel2019}).  

In our setting, Condition~\eqref{eq:full_conv} with $D=\R_+$ is satisfied for all \(t \notin \cS\), since between jumps the process evolves as a continuous CIR diffusion, whose support is \(\R_+\). The following lemma characterizes the full convex span condition for the support of the post-jump values $(X_{s_n})_{n\geq 1}$.

\begin{lemma}\label{lem:jump_conv}
Assume that the functions \(F_n:\R_+ \times [0,1] \to \R\), \(n \geq 1\), are continuous, and set
\[
g_n(x,z) \coloneqq x + F_n(x,z).
\]
Then,
\[
\operatorname{conv}(\operatorname{supp}(X_{s_n})) = \R_+ \quad \text{for all } n \geq 1
\]
if and only if
\[
\inf_{(x,z) \in \R_+ \times [0,1]} g_n(x,z) = 0
\quad \text{and} \quad 
\sup_{(x,z) \in \R_+ \times [0,1]} g_n(x,z) = \infty,
\]
for all $n\geq 1$. 
\end{lemma}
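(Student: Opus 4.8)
The plan is to reduce the claim to an elementary statement about pushforwards of probability measures under continuous maps. For $n\ge 1$ one has $X_{s_n}=X_{s_n-}+F_n(X_{s_n-},Z_n)=g_n(X_{s_n-},Z_n)$, so the law of $X_{s_n}$ is the image, under the continuous map $g_n$, of the law of the pair $(X_{s_n-},Z_n)$, and the whole argument amounts to describing $\operatorname{supp}(X_{s_n})$ in terms of $g_n$. (Throughout I tacitly assume that an extended CIR process $X$ is given, so that $X_{s_n}$ is well-defined; in the ``if'' direction this needs no extra hypothesis, since $\inf g_n=0$ forces $F_n(x,z)=g_n(x,z)-x\ge -x$, i.e.\ admissibility of $(F_n)$, and Theorem~\ref{th:existence} applies.)

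The first step is to compute the support of $(X_{s_n-},Z_n)$. Since $Z_n$ is independent of $\cF_{s_n-}$ and $X_{s_n-}$ is $\cF_{s_n-}$-measurable, the joint law is a product measure, whose support is $\operatorname{supp}(X_{s_n-})\times[0,1]$. I would then show $\operatorname{supp}(X_{s_n-})=\R_+$ by induction on $n$: with $s_0:=0$, on each interval $[s_{n-1},s_n)$ the process $X$ evolves as a continuous CIR diffusion started from the $\R_+$-valued random variable $X_{s_{n-1}}$, so the law of $X_{s_n-}$ is a mixture, over $y\in\R_+$, of the CIR transition kernels $p_{s_n-s_{n-1}}(y,\cdot)$; each such kernel has full support $\R_+$ (as recalled just before the lemma), and a mixture of probability measures all having support $\R_+$ again has support $\R_+$. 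Hence the pair $(X_{s_n-},Z_n)$ has support $S:=\R_+\times[0,1]$.

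Next I would invoke the general fact that, for a Borel probability measure $\mu$ with support $S$ on a metric space and a continuous map $h$, one has $\operatorname{supp}(h_*\mu)=\overline{h(S)}$: the inclusion $\subseteq$ follows from $S\subseteq h^{-1}(h(S))\subseteq h^{-1}(\overline{h(S)})$ together with $\mu(S^c)=0$, and the inclusion $\supseteq$ because for $y=h(s)$ with $s\in S$ and any open $V\ni y$ the set $h^{-1}(V)$ is an open neighbourhood of $s\in\operatorname{supp}\mu$, hence carries positive $\mu$-mass. Applying this with $h=g_n$ gives $\operatorname{supp}(X_{s_n})=\overline{g_n(\R_+\times[0,1])}$.

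The final step is purely topological. The set $\R_+\times[0,1]$ is connected, so its continuous image $g_n(\R_+\times[0,1])\subseteq\R$ is an interval with infimum $\underline g_n:=\inf_{(x,z)\in\R_+\times[0,1]}g_n(x,z)$ and supremum $\overline g_n:=\sup_{(x,z)\in\R_+\times[0,1]}g_n(x,z)$; therefore its closure is the closed interval with these endpoints, which is already convex, so $\operatorname{conv}(\operatorname{supp}(X_{s_n}))=[\underline g_n,\overline g_n]$ (read as $[\underline g_n,\infty)$ when $\overline g_n=+\infty$). Since $[\underline g_n,\overline g_n]=\R_+=[0,\infty)$ if and only if $\underline g_n=0$ and $\overline g_n=+\infty$, and this must hold for every $n\ge 1$, the stated equivalence follows. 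The only point requiring real care is the identification $\operatorname{supp}(X_{s_n-})=\R_+$ in the first step — in particular the mixture argument and the reading of $\R_+$ as the closed half-line $[0,\infty)$, so that it is irrelevant whether $0$ itself lies in the (possibly open) image $g_n(\R_+\times[0,1])$; the product-support identity, the pushforward--support fact, and the interval/convexity argument are all routine.
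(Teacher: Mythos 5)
Your proof is correct and follows essentially the same route as the paper's: identify $\operatorname{supp}(X_{s_n})$ with $\operatorname{cl}\big(g_n(\R_+\times[0,1])\big)$ via the continuous-pushforward-support fact (the paper's Lemma~\ref{lem:supp_continuous_map}) together with the product structure of $\operatorname{supp}(X_{s_n-},Z_n)$, and then translate the $\inf$/$\sup$ conditions on $g_n$. Your two refinements --- the inductive mixture argument establishing $\operatorname{supp}(X_{s_n-})=\R_+$, which the paper simply asserts, and the connectedness argument identifying the support as a closed interval --- are welcome additions but do not change the approach.
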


\begin{proof}
Since $F_n$ is continuous for all $n \geq 1$, it follows that $g_n$ is also continuous. By Lemma~\ref{lem:supp_continuous_map}, we obtain
\[
\operatorname{supp}(X_{s_n})
= \operatorname{cl}\big(g_n(\operatorname{supp}(X_{s_n-},Z_n))\big)
= \operatorname{cl}\big(g_n(\R_+ \times [0,1])\big),
\]
where the last equality uses that $X_{s_n-}$ and $Z_n$ are independent for all $n \geq 1$, and hence
\[
\operatorname{supp}(X_{s_n-},Z_n)
= \operatorname{supp}(X_{s_n-}) \times \operatorname{supp}(Z_n)
= \R_+ \times [0,1].
\]
In particular, we may write
\begin{equation}\label{eq:supp_jump}
\operatorname{supp}(X_{s_n})
= \operatorname{cl}\Bigg(\bigcup_{(x,z)\in\R_+ \times [0,1]} g_n(x,z)\Bigg).
\end{equation}

Now assume that $\operatorname{conv}(\operatorname{supp}(X_{s_n})) = \R_+$ for all $n \geq 1$. This is equivalent to  
\[
\operatorname{supp}(X_{s_n}) \subseteq \R_+,\quad \text{with} \quad \inf \operatorname{supp}(X_{s_n}) = 0, \,\sup \operatorname{supp}(X_{s_n}) = \infty.
\]

Using Equation~\eqref{eq:supp_jump}, the condition on the infimum can be restated as follows:
\begin{enumerate}
    \item $g_n(x,z) \geq 0$ for all $(x,z) \in \R_+ \times [0,1]$;
    \item for every $\varepsilon > 0$, there exists $(x^*, z^*) \in \R_+ \times [0,1]$ such that 
    \[
    g_n(x^*, z^*) < \varepsilon.
    \]
\end{enumerate}

Combining these two conditions yields
\[
0 \leq \inf_{(x,z)\in\R_+\times [0,1]} g_n(x,z) \leq g_n(x^*,z^*) < \varepsilon.
\]
Letting $\varepsilon \to 0$, we conclude that 
\[
\inf_{(x,z) \in \R_+ \times [0,1]} g_n(x,z) = 0.
\]

A similar argument applied to the supremum of $\operatorname{supp}(X_{s_n})$ shows that $g_n$ is unbounded over its domain.

Suppose now that 
\[
\inf_{(x,z) \in \R_+ \times [0,1]} g_n(x,z) = 0
\quad \text{and} \quad 
\sup_{(x,z) \in \R_+ \times [0,1]} g_n(x,z) = \infty.
\]
The condition on the infimum can be rewritten as
\[
\inf g_n(\R_+ \times [0,1]) = 0,
\]
which immediately implies
\[
\inf \operatorname{supp}(X_{s_n}) = \inf \operatorname{cl}(g_n(\R_+ \times [0,1])) = 0.
\]
Similarly, the condition on the supremum of $g_n$ ensures that $\operatorname{supp}(X_{s_n})$ is unbounded above. Together, these two conditions are equivalent to $\operatorname{conv}(\operatorname{supp}(X_{s_n})) = \R_+$.
\end{proof}

Before turning to the main result of this section, we illustrate the support condition with two simple examples: one in which the full convex span holds, and one in which it fails.  

\begin{example}[Trivial case without jumps]
If $F_n(x,z)=0$ for all $(x,z)\in\R_+\times [0,1]$ and $n\geq 1$, then 
\[
\operatorname{supp}(X_{s_n})=\operatorname{supp}(X_{s_n-})=\R_+,
\]
since no jump occurs and the process reduces to a continuous CIR model.  

Hence, the full convex span condition is satisfied, and it is immediate to check that the conditions of Lemma \ref{lem:jump_conv} are fulfilled.  
\end{example}

\begin{example}[Deterministic unit upward jumps]
If $F_n(x,z)=1$ for all $(x,z)\in\R_+\times [0,1]$ and $n\geq 1$, then 
\[
\operatorname{supp}(X_{s_n})=\operatorname{supp}(X_{s_n-}+1)=\R_+ + \{1\},
\] 
where 
\[
\R_+ + \{1\}=\{x+1 : x\in\R_+\}.
\]
Hence, 
\[
\operatorname{supp}(X_{s_n})=[1,\infty),
\]
which is convex and coincides with its convex hull, but does not equal $\R_+$, so the full convex span condition is violated.  

Indeed, 
\[
\inf_{(x,z) \in \R_+ \times [0,1]} g_n(x,z)=\inf_{x\in \R_+} (x+1) = 1\neq 0. 
\]
Thus, in the presence of strictly upward jumps, the full convex span condition fails whenever the jump distribution admits a strictly positive lower bound.  
\end{example}

We now combine the previous results to establish the necessary and sufficient conditions for the affine property of the extended CIR process.

\begin{theorem}\label{th:suff_nec}
Suppose that Assumption~\ref{ass:accumulation} holds and that the functions \(F_n:\R_+ \times [0,1] \to \R\), \(n \geq 1\), are continuous. Then there exists a unique quasiregular affine semimartingale \(X\) solving the SDE~\eqref{eq:CIR} such that
\[
\operatorname{conv}(\operatorname{supp}(X_t)) = \R_+ \quad \text{for all } t \geq 0,
\]
if and only if the jump sizes $(F_n)_{\geq 1}$ admit a conditional characteristic function of the form
\[
E \left[ e^{u F_n(X_{s_n-},Z_n)} \,\middle|\, \mathcal{F}_{s_n-} \right]
  = \exp\bigl( \gamma_{n,0}(u) + \gamma_{n,1}(u)\,X_{s_n-} \bigr),
  \qquad u\in i\mathbb{R},
\]
for some continuous functions \(\gamma_{n,0},\gamma_{n,1} : i\mathbb{R}\to\mathbb{C}\), and
\[
\inf_{(x,z) \in \R_+ \times [0,1]} \bigl(x+F_n(x,z)\bigr) = 0,
\qquad
\sup_{(x,z) \in \R_+ \times [0,1]} \bigl(x+F_n(x,z)\bigr) = \infty,
\]
for all $n\geq 1$. 
\end{theorem}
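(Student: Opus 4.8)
The plan is to combine the already-established pieces into an equivalence, proving the two implications separately. For the ``if'' direction, suppose the jump sizes admit the affine-form conditional characteristic function with continuous $\gamma_{n,0},\gamma_{n,1}$ and that the infimum/supremum conditions on $g_n(x,z)=x+F_n(x,z)$ hold. First I would observe that the sup/inf conditions, via Lemma~\ref{lem:jump_conv}, give $\operatorname{conv}(\operatorname{supp}(X_{s_n}))=\R_+$ for every $n$, and together with the fact that on each interjump interval $X$ is a continuous CIR diffusion (support $\R_+$), this yields $\operatorname{conv}(\operatorname{supp}(X_t))=\R_+$ for all $t\ge 0$. Next, I would argue that the infimum condition $\inf_{(x,z)}\bigl(x+F_n(x,z)\bigr)=0$ is \emph{equivalent} to the admissibility condition of Definition~\ref{def:support}: indeed $\essinf_z F_n(x,z)\ge -x$ for all $x$ says exactly $g_n(x,z)\ge 0$ a.e., while the vanishing of the infimum is the ``tightness'' that, combined with the affine structure (continuity of $F_n$ makes the essential infimum an infimum), is automatic once the support spans $\R_+$. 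Hence $(F_n)_{n\ge1}$ is admissible, and by Lemma~\ref{lem:affine_supp} the functions $\gamma_{n,0},\gamma_{n,1}$ satisfy Conditions (i)--(ii) there; Theorem~\ref{th:suff} then delivers a unique quasiregular almost surely non-negative affine semimartingale solving~\eqref{eq:CIR}, and we have just checked it has support of full convex span.

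For the ``only if'' direction, assume $X$ is a quasiregular affine semimartingale solving~\eqref{eq:CIR} with $\operatorname{conv}(\operatorname{supp}(X_t))=\R_+$ for all $t$. The key step is to extract the affine form of the jump law from the affine property~\eqref{eq:affine} of $X$ itself. I would fix a jump time $s_n$ and evaluate~\eqref{eq:affine} with $T=s_n$ and $t\uparrow s_n$, i.e.\ pass to the left limit, to obtain
\[
E\bigl[e^{uX_{s_n}}\mid\mathcal{F}_{s_n-}\bigr]
 = \exp\bigl(\phi_{s_n-}(s_n,u)+\psi_{s_n-}(s_n,u)X_{s_n-}\bigr),
\]
using quasiregularity (Condition (i) of~\cite[Definition~2.5]{keller-ressel2019}) to guarantee the left limits exist and Condition (ii) for continuity in $u$. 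Since $X_{s_n}=X_{s_n-}+F_n(X_{s_n-},Z_n)$ and $Z_n$ is independent of $\mathcal{F}_{s_n-}$, dividing by $e^{uX_{s_n-}}$ gives
\[
E\bigl[e^{uF_n(X_{s_n-},Z_n)}\mid\mathcal{F}_{s_n-}\bigr]
 = \exp\bigl(\phi_{s_n-}(s_n,u)+(\psi_{s_n-}(s_n,u)-u)X_{s_n-}\bigr),
\]
which is exactly the affine form~\eqref{eq:affine_jump} with $\gamma_{n,0}(u)=\phi_{s_n-}(s_n,u)$ and $\gamma_{n,1}(u)=\psi_{s_n-}(s_n,u)-u$, continuous in $u\in i\R$ by quasiregularity. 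The sup/inf conditions on $x+F_n(x,z)$ then follow from Lemma~\ref{lem:jump_conv} applied in the reverse direction, using that $\operatorname{conv}(\operatorname{supp}(X_{s_n}))=\R_+$ is part of the hypothesis and that $F_n$ is continuous.

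The main obstacle I anticipate is the ``only if'' direction's first step: one must be careful that the affine representation~\eqref{eq:affine}, a priori only asserted for $0\le t\le T$ with both endpoints on the grid, genuinely transfers to the pre-jump conditional characteristic function. This requires invoking quasiregularity to take $t\uparrow s_n$ inside both sides --- the left-hand side converges by the (c\`adl\`ag, hence left-limited) martingale $t\mapsto E[e^{uX_{s_n}}\mid\mathcal{F}_t]$ and the right-hand side by the finite-variation/c\`adl\`ag regularity of $\phi,\psi$ in $t$ --- and also that $X_{s_n-}$ is genuinely $\mathcal{F}_{s_n-}$-measurable so the exponent can be identified coefficientwise (this identification is legitimate because $\operatorname{supp}(X_{s_n-})=\R_+$ is not a single point, so the two affine functions of $X_{s_n-}$ agreeing a.s.\ forces equality of coefficients). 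A secondary subtlety is reconciling ``essential infimum over $z$'' with ``infimum over $(x,z)$'': continuity of $F_n$ lets us replace $\essinf_z$ by $\inf_z$, but one should spell out that the infimum condition of the theorem is precisely the conjunction of admissibility (Definition~\ref{def:support}) and the tightness forced by full convex span, so that Lemma~\ref{lem:affine_supp} and Theorem~\ref{th:suff} can be invoked cleanly in the ``if'' direction.
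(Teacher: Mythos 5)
Your proposal is correct and follows essentially the same route as the paper: the ``only if'' direction extracts $\gamma_{n,0}(u)=\phi_{s_n-}(s_n,u)$ and $\gamma_{n,1}(u)=\psi_{s_n-}(s_n,u)-u$ from the left limit of the affine representation at $s_n$ (the paper simply cites \cite[Lemma~2.10]{keller-ressel2019} for the identity you re-derive, and your remark that coefficient identification needs $\operatorname{supp}(X_{s_n-})$ to be non-degenerate is exactly why that lemma requires full convex span), while the ``if'' direction combines Lemma~\ref{lem:jump_conv}, Lemma~\ref{lem:affine_supp} and Theorem~\ref{th:suff} as in the paper. One small imprecision: the infimum condition is not \emph{equivalent} to admissibility but strictly stronger (it forces the post-jump support to reach down to $0$), which is the implication you actually use, so the argument is unaffected.
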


\begin{proof}
Assume there exists a unique quasiregular affine semimartingale \(X=(X_t)_{t\geq 0}\) solving \eqref{eq:CIR} and satisfying
\[
\operatorname{conv}(\operatorname{supp}(X_t)) = \R_+, \qquad t \ge 0.
\]
By Lemma~2.10 in \cite{keller-ressel2019}, for each deterministic jump time \(s_n\) the conditional characteristic function of the jump
\(F_n(X_{s_n-},Z_n)\) admits the representation
\begin{align*}
E\!\left[ e^{u F_n(X_{s_n-},Z_n)} \mid \mathcal{F}_{s_n-} \right]
&= \exp\!\big( -\Delta\phi_{s_n}(s_n,u) - \Delta\psi_{s_n}(s_n,u)\,X_{s_n-} \big) \\
&= \exp\!\big( \phi_{s_n-}(s_n,u) + (\psi_{s_n-}(s_n,u)-u)\,X_{s_n-} \big),
\end{align*}
for \(u\in i\R\), where \(\phi\) and \(\psi\) are the characteristic exponents of the affine semimartingale \(X\).

Thus the jump sizes \((F_n(X_{s_n-},Z_n))_{n\ge 1}\) have a conditional characteristic function of affine form, with
\begin{equation}\label{eq:gamma}
\gamma_{n,0}(u)\coloneqq \phi_{s_n-}(s_n,u), \qquad
\gamma_{n,1}(u)\coloneqq \psi_{s_n-}(s_n,u)-u.
\end{equation}
By quasiregularity, the maps \(\gamma_{n,0}\) and \(\gamma_{n,1}\) are continuous on \(i\R\). The conditions on the infimum and supremum of \(x + F_n(x,z)\) follow directly from Lemma~\ref{lem:jump_conv}.

For the converse implication, note that the infimum condition of \(x + F_n(x,z)\) implies
\[
0 \leq x + \inf_{z \in [0,1]} F_n(x,z) 
   \leq x + \essinf_{z \in [0,1]} F_n(x,z),
   \quad \text{for all } x \in \R_+,\, n \geq 1,
\]
since, on a space of positive measure, the essential infimum is never smaller than the pointwise infimum.  

Hence, this condition is stronger than the admissibility requirement stated in Definition~\ref{def:support}. The result then follows directly by combining Lemmas~\ref{lem:affine_supp}, \ref{lem:jump_conv}, and Theorem~\ref{th:suff}.
\end{proof}

\begin{remark}
By Lemma~2.7 in \cite{keller-ressel2019}, both mappings 
\[
u \mapsto \phi_{s_n-}(s_n,u) \quad\text{and}\quad u \mapsto \psi_{s_n-}(s_n,u)
\] 
send 
\( \mathbb{C}_{-} \) into \( \mathbb{C}_{-} \). Consequently, the functions 
\( \gamma_{n,0} \) and \( \gamma_{n,1} \) are well defined on the left half-plane, 
consistent with Lemma~\ref{lem:affine_supp}. 

Moreover, our results can be extended to time-dependent state spaces. In this more general setting, the support of the jump 
distribution may be smaller: it suffices that the support contains an open ball of full dimension, which in turn guarantees that its linear hull has full dimension.
\end{remark}

\section{Infinite divisibility of extended CIR processes}\label{sec:3}

Theorem~\ref{th:suff_nec} provides a complete characterization of the extended CIR process as an affine semimartingale. In this section, we further develop the affine framework by investigating the conditions under which the extended CIR process is infinitely divisible.

First, recall that to every affine semimartingale $X=(X_t)_{t\geq 0}$ on a state space \( D \subseteq \mathbb{R}^d \), one can associate a family of transition kernels \( p_{t,T}(x,B) \), defined for all \( 0 \leq t \leq T \), \( B \in \mathcal{B}(D) \), and \( x \in \operatorname{supp}(X_t) \), by considering the regular conditional distributions
\[
p_{t,T}(X_t, B) \coloneqq P(X_T \in B \mid X_t).
\]
An affine semimartingale \( X \) on \( D \) is said to be \emph{infinitely divisible} if, for every \( 0 \leq t \leq T \), the regular conditional distributions \( p_{t,T}(X_t, \cdot) \) are infinitely divisible probability measures on \( D \), \( P \)-almost surely (cf. Definition 4.2 in \cite{keller-ressel2019}).

Since the continuous CIR process is infinitely divisible, it is natural to expect that this property carries over to its extended version, provided that the jump sizes themselves are infinitely divisible, which is equivalent to their conditional characteristic functions admitting a Lévy--Khintchine representation.  

As in the general affine framework, we first characterize the admissibility condition given in Definition~\ref{def:support} for the case of infinitely divisible jump distributions. Since infinite divisibility is stronger than the affine property, the following result can be deduced from Lemma~\ref{lem:affine_supp} (cf. Remark~\ref{rem:inf_div} below). Nevertheless, we provide an independent proof.

\begin{lemma}\label{lem:support_IDD}
Suppose that Assumption~\ref{ass:accumulation} holds and that the jump sizes \( (F_n)_{n \geq 1} \) have a conditional characteristic function of the form  
\[
\mathbb{E} \left[ e^{u F_n(X_{s_n-},Z_n)} \mid \mathcal{F}_{s_n-} \right] = \exp \left( \gamma_{n,0}(u) + \gamma_{n,1}(u) X_{s_n-} \right), \quad \text{for all } u \in i \mathbb{R},
\]
where the functions \( \gamma_{n,0} \) and \( \gamma_{n,1} \) admit a Lévy--Khintchine representation:
\begin{align}\label{eq:general_LK}
\gamma_{n,j}(u) 
= u \beta_{n,j} + \frac{1}{2} \alpha_{n,j}^2 u^2 
+ \int_{\mathbb{R} \setminus \{0\}} \left( e^{u \xi} - 1 - u \xi \Ind_{\{ |\xi| \leq 1 \}} \right) \nu_{n,j}(d\xi),
\quad j \in \{0,1\},
\end{align}
with \( \beta_{n,j} \in \mathbb{R} \), \( \alpha_{n,j} \geq 0 \), and \( \nu_{n,j} \) being Borel measures satisfying
\[
\int_{\mathbb{R} \setminus \{0\}} \min(1, \xi^2) \, \nu_{n,j}(d\xi) < \infty, \quad j \in \{0,1\}.
\]

Then the sequence \( (F_n)_{n\geq 1} \) is admissible if and only if the following conditions hold for all \( n \geq 1 \):
\begin{itemize}
    \item[(i)] The diffusion coefficients vanish:
    \[
    \alpha_{n,0} = \alpha_{n,1} = 0.
    \]
    \item[(ii)] The drift coefficients satisfy:
    \[
    \beta_{n,0} \geq 0, \quad \beta_{n,1} \geq -1.
    \]
    \item[(iii)] The Lévy measures \( \nu_{n,0} \) and \( \nu_{n,1} \) are supported on \( \R_+\) and satisfy
    \[
    \int_0^1 \xi \, \nu_{n,j}(d\xi) < \infty, \quad j \in \{0,1\}.
    \]
\end{itemize}
\end{lemma}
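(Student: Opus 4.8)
The plan is to read admissibility off the Lévy--Khintchine triplet. By Definition~\ref{def:support}, the sequence $(F_n)_{n\ge1}$ is admissible precisely when, for every $x\in\R_+$ and every $n\ge1$, the random variable $F_n(x,Z_n)$ — whose law has, by hypothesis, characteristic function $u\mapsto\exp\!\big(\gamma_{n,0}(u)+\gamma_{n,1}(u)\,x\big)$ — is supported on $[-x,\infty)$; equivalently, the shifted variable $F_n(x,Z_n)+x$ is almost surely non-negative. Since a sum (and a non-negative rescaling) of Lévy--Khintchine exponents is again one, $u\mapsto\gamma_{n,0}(u)+x\,\gamma_{n,1}(u)$ is the exponent of an infinitely divisible law with triplet $\big(\beta_{n,0}+x\beta_{n,1},\ \alpha_{n,0}^2+x\alpha_{n,1}^2,\ \nu_{n,0}+x\nu_{n,1}\big)$ relative to the truncation $h(\xi)=\xi\Ind_{\{|\xi|\le1\}}$ used in~\eqref{eq:general_LK}; translating by the constant $x$ affects only the drift, so $F_n(x,Z_n)+x$ is infinitely divisible with triplet $\big(\beta_{n,0}+x(\beta_{n,1}+1),\ \alpha_{n,0}^2+x\alpha_{n,1}^2,\ \nu_{n,0}+x\nu_{n,1}\big)$. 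Admissibility thus amounts to: this one-parameter family of infinitely divisible laws is concentrated on $[0,\infty)$ for all $x\ge0$.

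Next I would invoke the classical characterization of non-negative infinitely divisible laws (subordinator triplets): an infinitely divisible probability measure on $\R$ with triplet $(b,a^2,\nu)$ (relative to $h$) is supported on $[0,\infty)$ if and only if $a=0$, $\nu$ is concentrated on $(0,\infty)$, $\int_{(0,1]}\xi\,\nu(d\xi)<\infty$, and the genuine drift $b-\int_{(0,1]}\xi\,\nu(d\xi)$ is non-negative — in which case this drift equals the essential infimum of the law. Applying this to the triplet of $F_n(x,Z_n)+x$ and demanding it for every $x\ge0$: evaluating the vanishing-Gaussian condition at $x=0$ and then at some $x>0$ forces $\alpha_{n,0}=\alpha_{n,1}=0$, i.e.\ Condition~(i); likewise the support and finite-first-moment-near-zero requirements decouple — using that for $x>0$ the measure $\nu_{n,0}+x\nu_{n,1}$ charges a Borel set iff $\nu_{n,0}$ or $\nu_{n,1}$ does, and $\int_0^1\xi\,(\nu_{n,0}+x\nu_{n,1})(d\xi)=\int_0^1\xi\,\nu_{n,0}(d\xi)+x\int_0^1\xi\,\nu_{n,1}(d\xi)$ — into Condition~(iii). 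Once (iii) holds the small jumps are summable, so I would recast~\eqref{eq:general_LK} in the truncation-free (subordinator) form, in which $\beta_{n,j}$ denotes the genuine drift; the drift of $F_n(x,Z_n)+x$ is then the affine-in-$x$ quantity $\beta_{n,0}+x(\beta_{n,1}+1)$, which is $\ge0$ on $[0,\infty)$ exactly when its value at $x=0$ and its slope are both non-negative, i.e.\ $\beta_{n,0}\ge0$ and $\beta_{n,1}\ge-1$ — Condition~(ii). The converse runs the same computation backwards: (i) and (iii) put each $\gamma_{n,j}$ in subordinator form, and (ii) makes the drift of $F_n(x,Z_n)+x$ non-negative for every $x\ge0$, whence $F_n(x,Z_n)\ge-x$ almost surely.

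The only genuinely delicate point, I expect, is the \emph{necessity} half combined with the ``for all $x\ge0$'' quantifier: one must argue that the four triplet conditions are not merely sufficient for non-negativity but necessary, and then extract the individual constraints on the pairs $(\alpha_{n,0},\alpha_{n,1})$, $(\nu_{n,0},\nu_{n,1})$, $(\beta_{n,0},\beta_{n,1})$ cleanly — the Gaussian and drift parts by specializing to $x=0$ and by letting $x\to\infty$ (an affine function of $x$ is non-negative on $[0,\infty)$ iff non-negative at the origin with non-negative slope), the Lévy-measure parts by the decoupling above. A secondary, purely bookkeeping subtlety is the truncation convention: the drift in~\eqref{eq:general_LK} is the $h$-drift, so the passage to the subordinator representation (licensed by~(iii)) must be made explicit before one reads off~(ii), and it is exactly this passage that renders the identity $\essinf F_n(x,Z_n)=\beta_{n,0}+x\beta_{n,1}$ transparent. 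I would also note that, since infinite divisibility implies the affine structure, the conclusion could alternatively be deduced from Lemma~\ref{lem:affine_supp} by computing the analytic continuations of $\gamma_{n,0},\gamma_{n,1}$ from their Lévy--Khintchine form; the direct argument above is, however, shorter and self-contained.
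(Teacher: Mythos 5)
Your proposal is correct and follows essentially the same route as the paper: both reduce admissibility to the statement that the law of $F_n(x,Z_n)$ (triplet affine in $x$) is supported on $[-x,\infty)$ for every $x\in\R_+$, invoke Sato's characterization of infinitely divisible laws bounded from below (Theorem~24.7 and Corollary~24.8) to kill the Gaussian part, restrict the L\'evy measures to $\R_+$ with $\int_0^1\xi\,\nu_{n,j}(d\xi)<\infty$, identify the infimum of the support with the drift $\beta_{n,0}+\beta_{n,1}x$, and read off Condition~(ii) from the non-negativity of an affine function of $x$. The one point where you are more explicit than the paper --- the passage from the truncated drift of~\eqref{eq:general_LK} to the genuine (subordinator) drift before identifying it with the essential infimum --- is indeed a step the paper performs only implicitly, and flagging it is appropriate.
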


\begin{proof}
As in the proof of Lemma~\ref{lem:affine_supp}, the admissibility of the sequence \( (F_n)_{n\geq 1} \) implies that the distribution of \( F_n(x, Z_n) \) is supported on \( [-x, \infty) \) for all \( x \in \mathbb{R}_+ \).

By Theorem~24.7 in \cite{Sato} (see also \cite[Theorem~11.2.2]{lukacs1970characteristic}), the conditional distribution of each jump size is bounded from below if and only if the diffusion coefficients \( \alpha_{n,j} \) in the Lévy--Khintchine representation vanish and the Lévy measures \( \nu_{n,j} \) are supported on \( \R_+ \), satisfying the integrability condition stated in~(iii).

Moreover, by Corollary~24.8 of \cite{Sato}, the infimum of the support of \( F_n(x,Z_n) \) is given by
\[
\inf \operatorname{supp}(F_n(x,Z_n)) = \beta_{n,0} + \beta_{n,1} x, \quad \text{for all } x \in \mathbb{R}_+.
\]

Using the identity
\[
\inf \operatorname{supp} F_n(x,Z_n) = \essinf_{z \in [0,1]} F_n(x,z),
\]
and substituting the above expression into the admissibility condition from Definition~\ref{def:support}, we obtain
\[
\beta_{n,0} + x(\beta_{n,1} + 1) \geq 0, \quad \text{for all } x \in \mathbb{R}_+.
\]
Since this equality holds for all \( x \in \mathbb{R}_+ \), it directly yields Condition~(ii) on the drift coefficients.

The converse implication follows by reversing the previous steps.
\end{proof}
     
\begin{remark}\label{rem:inf_div} 
In view of the integrability condition in~(iii) and by Remark~8.4 in \cite{Sato}, we obtain
\[
\int_0^{\infty} \bigl( e^{u \xi} - 1 \bigr) \, \nu_{n,j}(d\xi) < \infty,
\quad \text{for all } n \geq 1 \text{ and } j \in \{0,1\}.
\]
Hence, a sequence of jump sizes $(F_n)_{n\geq 1}$ with conditionally infinitely divisible distribution is admissible if and only if the functions \( \gamma_{n,0} \) and \( \gamma_{n,1} \) admit a Lévy--Khintchine representation of the form 
\begin{align*}
\gamma_{n,j}(u) 
= u \beta_{n,j} + \int_0^{\infty} \bigl( e^{u \xi} - 1 \bigr) \nu_{n,j}(d\xi),
\quad u \in i \mathbb{R},\ j \in \{0,1\},
\end{align*}
with $\beta_{n,0}\geq 0$ and $\beta_{n,1}\geq 1$.

The drift restriction can equivalently be derived from Condition~(ii) of Lemma~\ref{lem:affine_supp}, which in the infinitely divisible setting yields
\begin{align*}
\lim_{y \to \infty} \frac{\gamma_{n,0}(-y)}{y} 
&= -\beta_{n,0} + \lim_{y \to \infty} \int_0^\infty \frac{e^{-y \xi} - 1}{y} \, \nu_{n,0}(d\xi) \leq 0, \\
\lim_{y \to \infty} \frac{\gamma_{n,1}(-y)}{y} 
&= -\beta_{n,1} + \lim_{y \to \infty} \int_0^\infty \frac{e^{-y \xi} - 1}{y} \, \nu_{n,1}(d\xi) \leq 1.
\end{align*}
By dominated convergence, the integral terms vanish in the limit, so that Condition~(ii) of Lemma~\ref{lem:support_IDD} is recovered.

Moreover, taking real parts of $\gamma_{n,0}$ and $\gamma_{n,1}$ gives
\[
\Re(\gamma_{n,j}(u))
= \Re(u)\beta_{n,j}
+ \int_0^{\infty} \bigl( e^{\Re(u)\xi}\cos(\Re(u)\xi) - 1 \bigr) \nu_{n,j}(d\xi),
\quad j \in \{0,1\}.
\]
By Condition~(ii) of Lemma~\ref{lem:support_IDD}, it follows that for all $u \in \C_{-}$,
\[
\Re(\gamma_{n,0}(u)) \leq 0,
\qquad
\Re(\gamma_{n,1}(u)) \leq |u|,
\]
which in turn implies that $\gamma_{n,0}$ and $\gamma_{n,1}$ satisfy Condition~(i) of Lemma~\ref{lem:affine_supp} with $C(x)=x$.
\end{remark}

We now establish sufficient conditions under which the extended CIR process defines an infinitely divisible affine semimartingale.

\begin{theorem}\label{th:inf_div_dist}
Suppose that Assumption~\ref{ass:accumulation} holds, and that the jump sizes $(F_n)_{n\geq 1}$ admit a conditional characteristic function of Lévy--Khintchine form
\begin{equation}\label{eq:Levy}
\mathbb{E} \left[ e^{u F_n(X_{s_n-},Z_n)} \mid \mathcal{F}_{s_n-} \right] 
= \exp \left( \beta_n(X_{s_n-})u + \int_0^\infty \left( e^{u \xi} - 1 \right)\, \nu_n(d\xi,X_{s_n-}) \right),
\end{equation}
where
\[
\beta_n(x) = \beta_{n,0} + \beta_{n,1}x,
\qquad 
\nu_n(d\xi,x) = \nu_{n,0}(d\xi) + \nu_{n,1}(d\xi)\,x,
\quad x \in \R_+,
\]
with $\beta_{n,0} \geq 0$, $\beta_{n,1} \geq -1$, and where $\nu_{n,0}$ and $\nu_{n,1}$ are Borel measures supported on $\R_+$ satisfying
\[
\int_0^1 \xi \, \nu_{n,j}(d\xi) < \infty, 
\qquad j \in \{0,1\}.
\]
Then there exists a unique infinitely divisible, quasi-regular, and almost surely non-negative affine semimartingale \( X \) solving the SDE~\eqref{eq:CIR}.
\end{theorem}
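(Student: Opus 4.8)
The plan is to fold the hypotheses into the affine machinery of Theorem~\ref{th:suff} — which already delivers existence, uniqueness, non‑negativity and the quasi‑regular affine property — and then to upgrade to infinite divisibility by exhibiting the transition kernels of $X$ as iterated compositions of \emph{continuous‑state branching with immigration (CBI)}–type kernels on $\R_+$. First I would observe that \eqref{eq:Levy} is exactly the affine jump form \eqref{eq:affine_jump} with $\gamma_{n,j}(u)=\beta_{n,j}u+\int_0^\infty(e^{u\xi}-1)\,\nu_{n,j}(d\xi)$, $j\in\{0,1\}$, which are continuous on $i\R$ by dominated convergence (using $\int_0^1\xi\,\nu_{n,j}(d\xi)<\infty$ and the finiteness of $\nu_{n,j}$ away from $0$). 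Under the stated constraints these are precisely the conditions of Lemma~\ref{lem:support_IDD}, in the integrated form of Remark~\ref{rem:inf_div}; hence $(F_n)_{n\ge1}$ is admissible and $\gamma_{n,0},\gamma_{n,1}$ satisfy Conditions~(i)--(ii) of Lemma~\ref{lem:affine_supp} with $C(x)=x$. Theorem~\ref{th:suff} then yields the unique quasi‑regular, a.s.\ non‑negative affine semimartingale $X$ solving \eqref{eq:CIR}, so it remains only to prove that the regular conditional laws $p_{t,T}(X_t,\cdot)=P(X_T\in\cdot\mid X_t)$ are a.s.\ infinitely divisible, as required in \cite[Definition~4.2]{keller-ressel2019}.

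The core of the argument is a CBI‑type decomposition of these kernels. I would call a kernel $\kappa$ on $\R_+$ \emph{of CBI type} if $\int_{\R_+}e^{-\lambda y}\,\kappa(x,dy)=\exp(-A(\lambda)-xB(\lambda))$ for all $\lambda\ge0$ and $x\in\R_+$, with $A,B$ Bernstein functions. Two facts make the decomposition useful: a CBI‑type kernel produces infinitely divisible measures, since $A+xB$ is again Bernstein for every $x\ge0$ (so $\kappa(x,\cdot)$ is a subordinator time‑one law); and the class is closed under composition, since for $(\kappa_2\circ\kappa_1)(x,\cdot)\coloneqq\int\kappa_2(y,\cdot)\,\kappa_1(x,dy)$ one computes
\[
\int_{\R_+}e^{-\lambda z}\,(\kappa_2\circ\kappa_1)(x,dz)=\exp\!\big(-A_2(\lambda)-(A_1\circ B_2)(\lambda)-x\,(B_1\circ B_2)(\lambda)\big),
\]
and sums and compositions of Bernstein functions are again Bernstein (using $B_2(\lambda)\ge0$ for $\lambda\ge0$).

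Next I would check the two building blocks. The continuous CIR transition kernels $p^c_{s,t}$ ($s\le t$) are of CBI type — this is the classical statement that the continuous CIR process is a CBI process, visible from its scaled non‑central $\chi^2$ transition law (cf.\ \cite[Proposition~1.2.4]{Alfonsi2015Affine}), whose state‑dependent part is a compound‑Poisson--exponential distribution and whose state‑free part is a Gamma distribution. Each jump kernel $q_n(x,\cdot)\coloneqq\operatorname{Law}(X_{s_n}\mid X_{s_n-}=x)$ has Laplace transform $\exp\!\big(\gamma_{n,0}(-\lambda)+(\gamma_{n,1}(-\lambda)-\lambda)\,x\big)$ for $\lambda\ge0$, and both $\lambda\mapsto-\gamma_{n,0}(-\lambda)$ and $\lambda\mapsto\lambda-\gamma_{n,1}(-\lambda)=(1+\beta_{n,1})\lambda+\int_0^\infty(1-e^{-\lambda\xi})\,\nu_{n,1}(d\xi)$ are Bernstein functions — the second precisely because $1+\beta_{n,1}\ge0$ — so $q_n$ is of CBI type. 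Finally, fixing $0\le t\le T$, Assumption~\ref{ass:accumulation} leaves only finitely many jump times $s_{N(t)+1}<\cdots<s_{N(T)}$ in $(t,T]$, and the pathwise construction in Theorem~\ref{th:existence} (a piecewise CIR diffusion whose jump at $s_n$ depends only on $X_{s_n-}$ and the independent variable $Z_n$) shows that $p_{t,T}(x,\cdot)$ equals the composition, in the natural order, of the inter‑jump continuous kernels with $q_{N(t)+1},\dots,q_{N(T)}$. Hence $p_{t,T}$ is of CBI type, so $p_{t,T}(x,\cdot)$ is infinitely divisible for every $x\in\R_+$; since $\operatorname{supp}(X_t)\subseteq\R_+$ by non‑negativity, $p_{t,T}(X_t,\cdot)$ is infinitely divisible $P$‑a.s., which is the claim.

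The main obstacle will be the closure of the CBI class under composition: this rests on the non‑trivial fact that a composition of Bernstein functions is Bernstein, so the real content is to carry the ``branching exponent'' $\lambda\mapsto B(\lambda)$ through every continuous and every jump step and verify that it remains a subordinator Laplace exponent — which is exactly where the constraint $\beta_{n,1}\ge-1$ enters. A secondary point requiring care is the identification of $p_{t,T}$ with the advertised composition of one‑step kernels, which follows from the Markovian flow structure of the construction; one should also double‑check that the affine‑semimartingale quasi‑regularity already obtained is consistent with this transition‑kernel viewpoint. An alternative would be to read infinite divisibility directly off the differential and jump characteristics via the structural results of \cite{keller-ressel2019}, but the CBI decomposition above appears to be the most self‑contained route.
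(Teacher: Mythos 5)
Your proposal is correct, and the first half (reducing to Lemma~\ref{lem:support_IDD}, Lemma~\ref{lem:affine_supp} and Theorem~\ref{th:suff} to obtain the unique quasi-regular, non-negative affine semimartingale) coincides with the paper's argument. For the infinite-divisibility step, however, you take a genuinely different route. The paper stays inside the machinery of \cite{keller-ressel2019}: it writes down the good parameter set of $X$, enhances it at the jump dates (setting $\Theta_j(s_n)=\beta_{n,j}$, $\Sigma_j(s_n)=0$ and attaching the L\'evy measures $\nu_{n,j}$), checks the admissibility conditions of their Definition~5.1, and invokes their Theorem~5.7 to produce an infinitely divisible affine semimartingale that must coincide with $X$ by uniqueness — essentially the ``alternative'' you mention in your closing sentence. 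You instead prove infinite divisibility directly at the level of transition kernels: every one-step kernel (inter-jump CIR step or jump step) has Laplace transform $\exp(-A(\lambda)-xB(\lambda))$ with $A,B$ Bernstein and vanishing at $0$, the class of such kernels is closed under composition because sums and compositions of Bernstein functions are Bernstein, and a finite composition over the jumps in $(t,T]$ (finite by Assumption~\ref{ass:accumulation}) then exhibits $p_{t,T}(x,\cdot)$ as the time-one law of a subordinator. Your route is more self-contained and makes the role of the hypotheses completely transparent — $\beta_{n,1}\ge-1$ is exactly what keeps the branching exponent $\lambda\mapsto(1+\beta_{n,1})\lambda+\int_0^\infty(1-e^{-\lambda\xi})\,\nu_{n,1}(d\xi)$ Bernstein — and it yields explicit L\'evy exponents for every transition kernel; the paper's route is shorter given the cited structural results and dovetails with the converse characterization via \cite[Lemma~4.4]{keller-ressel2019} mentioned in the subsequent remark. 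To make your version fully rigorous you should (a) justify the identification of $p_{t,T}$ with the advertised finite composition via the Markov/flow structure of the pathwise construction (which the already-established affine property of $X$ supplies), and (b) cite the two standard facts you lean on, namely that compositions of Bernstein functions are Bernstein and that a law on $\R_+$ is infinitely divisible iff its Laplace exponent is a Bernstein function vanishing at the origin; neither is an obstacle.
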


\begin{proof} 
If the conditional characteristic functions of the jump sizes \( (F_n)_{n\geq 1} \) admit the Lévy--Khintchine representation~\eqref{eq:Levy}, then in particular they are of the affine exponential form~\eqref{eq:affine_jump}. Moreover, by Lemma~\ref{lem:support_IDD}, the sequence \( (F_n)_{n\geq 1} \) is admissible in the sense of Definition~\ref{def:support}. 

It then follows from Lemma~\ref{lem:affine_supp} that the functions \( \gamma_{n,0} \) and \( \gamma_{n,1} \), defined by
\[
\gamma_{n,j}(u) 
= u \beta_{n,j} + \int_0^{\infty} \bigl( e^{u \xi} - 1 \bigr) \nu_{n,j}(d\xi),
\quad u \in i \mathbb{R},\ j \in \{0,1\},
\]
with parameters \( \beta_{n,j} \) and \( \nu_{n,j} \) satisfying the conditions stated in the theorem, fulfill Conditions~(i)--(ii) of Lemma~\ref{lem:affine_supp}.

Applying Theorem~\ref{th:suff}, we then conclude that there exists a unique quasi-regular, almost surely non-negative affine semimartingale \( X \) solving the SDE~\eqref{eq:CIR}.

To establish infinite divisibility, we first observe that the good parameter set (see Definition~3.1 of \cite{keller-ressel2019}) corresponding to the semimartingale \( X \) is given by
\begin{equation}\label{eq:param}
(A, \gamma, \widehat\Theta, \widehat\Sigma) = (\; A, \gamma_j, \widehat\Theta_j, \widehat\Sigma_j \;)_{j=0,1}.
\end{equation}

Here, \( A:\R_+ \to \R_+ \) is the non-decreasing, càdlàg function
\[
A(t) = t + N(t), \quad t \geq 0,
\]

where \( N \) denotes the counting function defined in Proposition~\ref{prop:compensator}, and \( \gamma_j : \R_+ \times \C_- \to \C^2 \) are complex-valued functions defined by
\begin{align*}
\gamma_j(t,u) &= 0, \quad \text{for all } (t,u) \in (\R_+ \setminus \cS) \times \C_-,\\
\gamma_j(s_n,u) &= \gamma_{n,j}(u), \quad \text{for all } (s_n,u) \in \cS \times \C_-,
\end{align*}
for \( j \in \{0,1\} \). Finally, the vectors \( \widehat\Theta, \widehat\Sigma \in \R^2 \) are given by
\[
\widehat\Sigma_0 = 0, \quad \widehat\Sigma_1 = \sigma, \quad \widehat\Theta_0 = \kappa \theta, \quad \widehat\Theta_1 = -\kappa.
\]

Since \( \Delta A_{s_n} = 1 \) for all \( n \geq 1 \), following the procedure in \cite[Definition~4.5]{keller-ressel2019}, the set of parameters in \eqref{eq:param} can be enhanced by defining the functions \( \Theta, \Sigma : \R_+ \to \R^2 \) as follows:

\begin{align*}
\Theta_0(t) &= \kappa \theta, \quad \text{for all } t \in \R_+ \setminus \cS, 
\qquad \Theta_0(s_n) = \beta_{n,0}, \quad \text{for all } s_n \in \cS, \\
\Theta_1(t) &= -\kappa, \quad \text{for all } t \in \R_+ \setminus \cS, 
\qquad \Theta_1(s_n) = \beta_{n,1}, \quad \text{for all } s_n \in \cS,
\end{align*}

and
\[
\Sigma_0(t) = 0, \quad \text{for all } t \geq 0, \qquad 
\Sigma_1(t) = \sigma \, \Ind_{t \in \R_+ \setminus \cS}.
\]
By also incorporating the Lévy measures, we define the enhanced set of parameters of \( X \) as
\begin{equation}\label{eq:param_enh}
(A, \Theta, \Sigma, \nu) = (\; A, \Theta_j, \Sigma_j, \nu_j \;)_{j=0,1}.
\end{equation}

We now verify that this enhanced set of parameters is admissible in the sense of Conditions~(i)--(ii') of Definition~5.1 in \cite{keller-ressel2019}. Since the state space of \( X \) is \( \R_+ \), the admissibility conditions to be checked are those corresponding to the non-negative components of affine semimartingales in \( \R^d \). Condition~(i) of Definition~5.1, which concerns the continuous part of the affine semimartingale, is immediately satisfied for the extended CIR process.

Condition~(ii') follows directly by construction. At the jump dates \( (s_n)_{n\geq 1} \), the diffusion coefficients \(
\Sigma_0(s_n)\) and \(\Sigma_1(s_n)\) vanish, the drift parameters satisfy
\[
\Theta_0(s_n) = \beta_{n,0} \geq 0, \quad \Theta_1(s_n) = \beta_{n,1} \geq -1,
\] 
and the Lévy measures \( \nu_{n,0} \) and \( \nu_{n,1} \) are supported on \( \R_+ \), with the required integrability conditions holding.

Hence, the enhanced parameter family \eqref{eq:param_enh} satisfies Conditions~(i)--(ii') of Definition~5.1 in \cite{keller-ressel2019}. 

Applying Theorem~5.7 of \cite{keller-ressel2019}, we conclude that there exists an infinitely divisible affine semimartingale solving the SDE~\eqref{eq:CIR} with almost surely non-negative paths. By uniqueness, this process coincides with \( X \), and is therefore also quasi-regular.
\end{proof}

\begin{remark}
In view of \cite[Lemma~4.4]{keller-ressel2019}, a complete characterization of infinitely divisible extended CIR processes is possible. Specifically, Theorem~\ref{th:inf_div_dist} can be extended to include the converse implication. For this purpose, the same conditions as in Theorem~\ref{th:suff_nec} on the range of the functions \( x + F_n(x,z) \), which generate the jump states \( (X_{s_n})_{n \geq 1} \), must be imposed to ensure the good behaviour of \( \operatorname{supp}(X_{s_n-}) \).

It is also worth noting that both examples presented in Section~\ref{sec:1} fall into the class of infinitely divisible extended CIR processes, since their jump-size distributions are infinitely divisible. In particular, for the time-changed CIR model, infinite divisibility is preserved under the specified time change, as expected from \cite[Theorem~30.1]{Sato}.
\end{remark}

\appendix

\section{Auxiliary results}

This appendix is devoted to auxiliary results, in particular those used in the proofs of Theorem~\ref{th:suff} and Lemma~\ref{lem:jump_conv}.

\subsection{Characteristic function of bounded distributions}

The following result characterizes distributions with bounded support in terms of their characteristic function. In the following, the characteristic function of a distribution $p$ is defined as  
\[
\varphi(t) = \mathbb{E}[e^{itX}], \quad X \sim p.
\]

A distribution \(p\), associated with a random variable \(X\), is said to have support bounded from below with lower bound \( a = \inf \operatorname{supp}(p) \) if, for any \( \varepsilon > 0 \),  
\[
p(\{X \leq a - \varepsilon\}) = 0 \quad \text{and} \quad p(\{X \leq a + \varepsilon\}) > 0.
\]

Similarly, \(p\) is said to have support bounded from above with upper bound \( b = \sup \operatorname{supp}(p) \) if, for any \( \varepsilon > 0 \),  
\[
p(\{X \leq b - \varepsilon\}) < 1 \quad \text{and} \quad p(\{X \leq b + \varepsilon\}) = 1.
\]

\begin{theorem}[Theorem 11.1.2 of \cite{lukacs1970characteristic}]
Let \( p \) be a distribution with characteristic function \( \varphi \). The distribution \( p \) has support bounded from below (respectively, from above) if and only $\varphi$ admits an analytic extension to the upper (respectively, lower) half-plane and satisfies the growth condition  
\[
\abs{\varphi(z)} \leq e^{C\abs{z}},
\]
for some constant \( C > 0 \) and for all $z\in\C$ such that \( \Im(z) > 0 \) (respectively, \( \Im(z) < 0 \)). The lower and upper bounds of \(p \) are given by:
\[
\inf\operatorname{supp}(p) = -\lim_{y\to\infty} \frac{\log \varphi(iy)}{y}, \quad
\sup\operatorname{supp}(p) = \lim_{y\to\infty} \frac{\log \varphi(-iy)}{y}.
\]
\end{theorem}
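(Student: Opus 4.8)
The plan is to prove the two implications for the ``support bounded from below'' case and to read off the corresponding endpoint formula; the ``bounded from above'' case then follows by applying the statement to the law of $-X$, whose characteristic function is $\varphi(-\cdot)$, so that analyticity in the upper half-plane for $-X$ is precisely analyticity in the lower half-plane for $X$. For the easy implication, suppose $a\coloneqq\inf\supp(p)$ is finite, so that $p$ is carried by $[a,\infty)$. I would set $\Phi(z)\coloneqq\int_{[a,\infty)}e^{izx}\,p(dx)$ and verify --- by dominated convergence combined with Morera's theorem, or by differentiating under the integral sign --- that $\Phi$ is analytic on $\{\Im z>0\}$, continuous on its closure, and agrees with $\varphi$ on $\R$. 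For $\Im z=y\ge 0$ the pointwise bound $\abs{e^{izx}}=e^{-yx}\le e^{-ay}$ on $[a,\infty)$ gives $\abs{\Phi(z)}\le e^{-ay}\le e^{\abs{a}\abs{z}}$, which is the asserted growth estimate (with $C=\abs{a}$, or any $C>0$ when $a=0$). The same computation yields, for $y>0$ and $\varepsilon>0$,
\[
e^{-(a+\varepsilon)y}\,p\bigl([a,a+\varepsilon)\bigr)\;\le\;\varphi(iy)=\int_{[a,\infty)}e^{-yx}\,p(dx)\;\le\;e^{-ay},
\]
where $p([a,a+\varepsilon))>0$ by definition of the infimum of the support; applying $y^{-1}\log(\cdot)$, letting $y\to\infty$ and then $\varepsilon\downarrow 0$, one obtains $\lim_{y\to\infty}y^{-1}\log\varphi(iy)=-a$, which is the stated formula.

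For the converse, suppose $\varphi$ extends to a function $\Phi$ that is analytic on $\{\Im z>0\}$, continuous on its closure, and satisfies $\abs{\Phi(z)}\le e^{C\abs{z}}$. The decisive point is that $p$ has support bounded from below; granting this, $\int e^{-yx}\,p(dx)<\infty$ for every $y>0$, and uniqueness of analytic continuation forces this integral to coincide with $\Phi(iy)$, whereupon the computation above also supplies the endpoint formula. To prove that $\supp(p)$ is bounded below I would regularize by a Gaussian: for $\sigma>0$ let $f_\sigma$ be the bounded smooth density of $X+\sigma\zeta$, with $\zeta\sim\cN(0,1)$ independent of $X$; its characteristic function $\varphi(t)e^{-\sigma^2t^2/2}$ extends to $\Phi(z)e^{-\sigma^2z^2/2}$, which decays rapidly along every horizontal line in the upper half-plane. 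Shifting the Fourier-inversion contour from $\R$ up to $\{\Im z=y\}$ --- permissible by Cauchy's theorem together with this decay --- gives, for each $y>0$, an estimate of the form $f_\sigma(x)\le K(\sigma,y)\,e^{yx}$; optimizing over $y>0$ yields a Gaussian-type bound on the left tail of $f_\sigma$ which, for $x$ below a fixed multiple of $C$, tends to $0$ as $\sigma\downarrow 0$. Since $X+\sigma\zeta$ converges in distribution to $X$, the portmanteau theorem then forces $p\bigl((-\infty,-R)\bigr)=0$ for all sufficiently large $R$. (Alternatively, one may appeal to the theory of analytic characteristic functions: a half-plane Phragm\'en--Lindel\"of argument upgrades the bound to $\abs{\Phi(z)}\le e^{c\,\Im z}$ with $c=\limsup_{y\to\infty}y^{-1}\log\abs{\Phi(iy)}<\infty$, while the representation $\Phi(z)=\int e^{izx}\,p(dx)$ on the half-plane of analyticity is classical.)

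The main obstacle is this converse direction, and within it the identification $\Phi(iy)=\int e^{-yx}\,p(dx)$ --- equivalently, finiteness of the moment generating function of $p$ on $(0,\infty)$. The hypothesis $\abs{\Phi(z)}\le e^{C\abs{z}}$ gives no decay whatsoever as $\Re z\to\pm\infty$, so the inversion contour cannot be pushed upward directly; the Gaussian regularization (equivalently, the half-plane Phragm\'en--Lindel\"of estimate) is exactly the device that repairs this. Everything else is routine real and complex analysis.
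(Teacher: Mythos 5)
The paper does not actually prove this statement: it is imported verbatim as Theorem~11.1.2 of Lukacs (1970), and the only argument the authors supply in the appendix is for the corollary obtained by rotating the complex variable. So there is no internal proof to compare against; what you have written is a self-contained proof of a cited classical result. Your argument is essentially correct and follows the standard line. The forward direction (semi-bounded support gives analyticity of $\Phi(z)=\int_{[a,\infty)}e^{izx}\,p(dx)$ in the upper half-plane, the bound $|\Phi(z)|\le e^{-ay}\le e^{|a||z|}$, and the squeeze $e^{-(a+\varepsilon)y}\,p([a,a+\varepsilon))\le\varphi(iy)\le e^{-ay}$ yielding the endpoint formula) is exactly the textbook computation, and you correctly observe that $p([a,a+\varepsilon))>0$ by the definition of the support. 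For the converse, your Gaussian mollification plus contour shift is a clean and valid substitute for Lukacs's own route via the ridge property of analytic characteristic functions: the shifted-contour bound gives $f_\sigma(x)\le K\sigma^{-1}\exp\bigl((C^2-(x+C)^2)/(2\sigma^2)\bigr)$ after optimizing in $y$, hence $\supp(p)\subseteq[-2C,\infty)$, after which the forward direction upgrades this crude bound to the exact endpoint. Two points deserve explicit care in a written-out version, though neither is a gap: (i) ``admits an analytic extension to the upper half-plane'' must be read as including continuity up to the real axis, which you need both to identify $\Phi(iy)$ with $\int e^{-yx}\,p(dx)$ via the reflection/identity theorem and to justify pushing the inversion contour down to $\R$; (ii) the portmanteau step should be applied to the open set $(-\infty,x_0)$, giving $p((-\infty,x_0))\le\liminf_{\sigma\downarrow 0}P(X+\sigma\zeta<x_0)=0$ for $x_0<-2C$. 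What each approach buys: the paper's citation keeps the appendix short and defers to a standard reference, while your proof makes the appendix self-contained and, via the explicit mollification, avoids invoking the general theory of analytic characteristic functions.
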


As in our work, it is often convenient to define the characteristic function over a complex domain. We therefore reformulate Theorem 11.1.22 of \cite{lukacs1970characteristic} as follows.

\begin{corollary}[Theorem 11.1.2 of \cite{lukacs1970characteristic}, reformulated]\label{cor:Lukacs}
Let \( p\) be a distribution function with characteristic function \( \varphi \) defined as  
\[
\varphi(u) = E[e^{uX}], \quad u \in i\mathbb{R}, \quad X \sim p.
\]
The distribution \( p \) has support bounded from below (respectively, from above) if and only if $\varphi$ admits an analytic extension to the left (respectively, right) half-plane and satisfies the growth condition  
\[
\abs{\varphi(w)} \leq e^{C\abs{w}},
\]
for some constant \( C > 0 \) and for all $z\in\C$ such that \( \Re(w) < 0 \) (respectively, \( \Re(w) > 0 \)). The lower and upper bounds of \( p \) are given by  
\[
\inf\operatorname{supp}(p) = -\lim_{y\to\infty} \frac{\log \varphi(-y)}{y}, \quad
\sup\operatorname{supp}(p) = \lim_{y\to\infty} \frac{\log \varphi(y)}{y}.
\]
\end{corollary}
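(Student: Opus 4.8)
The plan is to obtain Corollary~\ref{cor:Lukacs} directly from Theorem~11.1.2 of \cite{lukacs1970characteristic} (stated just above) by the elementary change of variable that reconciles the two conventions for the characteristic function. Denote by $\hat\varphi(t)=E[e^{itX}]$ the characteristic function in the sense of Theorem~11.1.2 and by $\varphi(u)=E[e^{uX}]$, $u\in i\R$, the one used in the corollary. On the imaginary axis these agree after a twist: $\varphi(it)=\hat\varphi(t)$ for $t\in\R$, so as functions one has $\hat\varphi(z)=\varphi(iz)$ and, equivalently, $\varphi(w)=\hat\varphi(-iw)$.

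First I would note that $z\mapsto iz$ is a biholomorphism of $\C$ that carries the open upper half-plane $\{\Im z>0\}$ onto the open left half-plane $\{\Re w<0\}$ (and the lower half-plane onto the right half-plane), maps $\R$ onto $i\R$, and has inverse $w\mapsto -iw$. Hence $\hat\varphi$ admits an analytic extension to $\{\Im z>0\}$, continuous up to $\R$, if and only if $\varphi$ admits an analytic extension to $\{\Re w<0\}$, continuous up to $i\R$: composition with this biholomorphism transports analyticity in both directions, the two extensions being related by $\varphi(w)=\hat\varphi(-iw)$. Because $|-iw|=|w|$, the growth estimate $|\hat\varphi(z)|\le e^{C|z|}$ on $\{\Im z>0\}$ is equivalent to $|\varphi(w)|\le e^{C|w|}$ on $\{\Re w<0\}$ with the same constant $C$; the statements for ``bounded from above'' follow verbatim using the lower / right half-planes. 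This yields the equivalence part of the corollary.

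Next I would specialise the two limit formulas of Theorem~11.1.2. For $y>0$ one has $\hat\varphi(iy)=\varphi(i\cdot iy)=\varphi(-y)$ and $\hat\varphi(-iy)=\varphi(i\cdot(-iy))=\varphi(y)$, so $\inf\operatorname{supp}(p)=-\lim_{y\to\infty}y^{-1}\log\hat\varphi(iy)$ and $\sup\operatorname{supp}(p)=\lim_{y\to\infty}y^{-1}\log\hat\varphi(-iy)$ become precisely the two formulas in the corollary. It is worth recording here that when $\operatorname{supp}(p)$ is bounded below, $\varphi(-y)=E[e^{-yX}]\in(0,\infty)$ for every $y>0$, since $e^{-yX}\le e^{-y\inf\operatorname{supp}(p)}$, so $\log\varphi(-y)$ is a genuine real number; symmetrically, $\log\varphi(y)=\log E[e^{yX}]$ is well-defined when $\operatorname{supp}(p)$ is bounded above. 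I do not expect any real obstacle: the entire content is that the two conventions differ by the rotation $w=iz$, which is biholomorphic and interchanges the relevant half-planes, so analyticity, the exponential bound and the limit formulas all transfer with no extra work; the only points demanding a moment's care are keeping the signs of the half-planes straight under the rotation and confirming that the logarithms appearing in the support formulas are well-defined, both of which are handled above.
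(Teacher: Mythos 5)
Your proof is correct and follows essentially the same route as the paper: both arguments reduce the corollary to Theorem~11.1.2 via the rotation $w=iz$, which carries the upper (resp.\ lower) half-plane onto the left (resp.\ right) half-plane, preserves the modulus so the growth bound transfers unchanged, and turns $\hat\varphi(iy)$, $\hat\varphi(-iy)$ into $\varphi(-y)$, $\varphi(y)$ in the limit formulas. Your version is in fact slightly more careful than the paper's (explicit inverse map, sign checks, and well-definedness of the logarithms), but there is no substantive difference in approach.
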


\begin{proof}
We show only the case of a distribution bounded from below (the other case follows by the same argument). By applying the change of variable \( z \mapsto iz \), where \( w = iy \), the upper half-plane is rotated by \( \frac{\pi}{2} \), while preserving the modulus. This change does not affect the growth condition, and thus we obtain the same upper bound for \( \varphi(w) \) in the left half-plane.

The limit conditions for the lower and upper bounds of the distribution function \( F(x) \) follow naturally by applying the change of variable to \( z = iy \), with \( y > 0 \).
\end{proof}

\subsection{Support of continuos images of random variables}

\begin{lemma}\label{lem:supp_continuous_map}
Let $X$ be a random variable taking values in a topological space $S$ endowed 
with its Borel $\sigma$-algebra, and let 
$g:S \to T$ be a continuous mapping into another topological space $T$, also 
endowed with its Borel $\sigma$-algebra. Then,
\[
\operatorname{supp}(g(X)) \;=\; \operatorname{cl}\big(g(\operatorname{supp}(X))\big),
\]
where $\operatorname{cl}$ denotes the topological closure in $T$.
\end{lemma}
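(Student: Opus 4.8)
The plan is to pass to the laws of the random variables. Write $\mu$ for the law of $X$ on $S$ and $\nu \coloneqq \mu\circ g^{-1}$ for the law of $g(X)$ on $T$, so that the statement becomes the purely measure-theoretic identity $\operatorname{supp}(\nu) = \operatorname{cl}\bigl(g(\operatorname{supp}(\mu))\bigr)$, where $\operatorname{supp}(\mu)$ is the support of the measure, i.e.\ the set of points every open neighbourhood of which has positive $\mu$-mass. The two ingredients I will use repeatedly are the pushforward formula $\nu(B)=\mu\bigl(g^{-1}(B)\bigr)$ for Borel $B\subseteq T$, and the continuity of $g$, which guarantees that $g^{-1}(V)$ is open in $S$ whenever $V$ is open in $T$. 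I also use that $\operatorname{supp}(\mu)$ is closed and — since the spaces we apply the lemma to are second countable (indeed Polish) — carries full mass, $\mu\bigl(S\setminus\operatorname{supp}(\mu)\bigr)=0$.

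For the inclusion $\operatorname{cl}\bigl(g(\operatorname{supp}(\mu))\bigr)\subseteq\operatorname{supp}(\nu)$, since the right-hand side is closed it suffices to show $g(\operatorname{supp}(\mu))\subseteq\operatorname{supp}(\nu)$. Fix $x\in\operatorname{supp}(\mu)$ and let $V$ be any open neighbourhood of $g(x)$ in $T$. Then $g^{-1}(V)$ is an open neighbourhood of $x$, so $\mu\bigl(g^{-1}(V)\bigr)>0$ by definition of the support, whence $\nu(V)=\mu\bigl(g^{-1}(V)\bigr)>0$. As $V$ was arbitrary, $g(x)\in\operatorname{supp}(\nu)$.

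For the reverse inclusion $\operatorname{supp}(\nu)\subseteq\operatorname{cl}\bigl(g(\operatorname{supp}(\mu))\bigr)$, I argue by contraposition. Suppose $y\notin\operatorname{cl}\bigl(g(\operatorname{supp}(\mu))\bigr)$. Then there is an open set $V\ni y$ with $V\cap g(\operatorname{supp}(\mu))=\emptyset$, equivalently $g^{-1}(V)\cap\operatorname{supp}(\mu)=\emptyset$. Hence $\nu(V)=\mu\bigl(g^{-1}(V)\bigr)\le\mu\bigl(S\setminus\operatorname{supp}(\mu)\bigr)=0$, so $y\notin\operatorname{supp}(\nu)$. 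Combining the two inclusions yields the claim.

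The only genuinely non-trivial point is the fact that $\mu$ is concentrated on its support, $\mu\bigl(S\setminus\operatorname{supp}(\mu)\bigr)=0$, which is where the topological hypothesis enters: in a second countable space one covers $S\setminus\operatorname{supp}(\mu)$ by the at most countably many basic open sets of zero $\mu$-mass and concludes by countable subadditivity. For the applications in this paper ($S$ a subset of a Euclidean space, $T=\R$) this is automatic, so I would either state the lemma for second countable (or separable metric) spaces, or simply note that this hypothesis is in force; in full topological generality the right-to-left inclusion may fail. Everything else is a routine manipulation of pushforwards and preimages, so I do not anticipate further obstacles.
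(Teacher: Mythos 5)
Your proof is correct and follows essentially the same route as the paper's: both inclusions rest on the pushforward identity $\nu(V)=\mu\bigl(g^{-1}(V)\bigr)$, the openness of $g^{-1}(V)$ for open $V$, and the fact that $\mu$ is concentrated on $\operatorname{supp}(\mu)$, the only cosmetic difference being that you prove $\operatorname{supp}(\nu)\subseteq\operatorname{cl}\bigl(g(\operatorname{supp}(\mu))\bigr)$ by contraposition where the paper argues directly from the definition of closure. Your caveat that $\mu\bigl(S\setminus\operatorname{supp}(\mu)\bigr)=0$ needs a topological hypothesis such as second countability is well taken: the paper's proof asserts $P_X(\operatorname{supp}(X))=1$ for an arbitrary topological space without justification, so your proposed restriction of the hypotheses is a genuine (if harmless for the Euclidean applications at hand) sharpening of the statement.
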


\begin{proof}
We prove the equality by showing the two inclusions.  For the first inclusion, let
\(P_X\) and \(P_Y\) denote the laws of \(X\) and \(Y\coloneqq g(X)\), respectively,
and take \(y\in\operatorname{supp}(g(X))\). By definition of the support, every
open neighbourhood \(V\) of \(y\) satisfies \(P_Y(V)>0\). Since \(P_Y\) is the pushforward of \(P_X\) under \(g\), we have
\[
P_X\big(g^{-1}(V)\big)=P_Y(V)>0.
\]
Moreover, because \(g\) is continuous, the preimage \(g^{-1}(V)\) of the open set
\(V\) is itself open in \(S\), and hence Borel measurable. Recall that \(\operatorname{supp}(X)\) is the complement of the largest open set
of \(P_X\)-measure zero, hence \(P_X(\operatorname{supp}(X))=1\). Therefore any
measurable set of positive \(P_X\)-measure must intersect \(\operatorname{supp}(X)\);
in particular \(g^{-1}(V)\cap\operatorname{supp}(X)\neq\varnothing\). Consequently
there exists \(x\in g^{-1}(V)\cap\operatorname{supp}(X)\), and hence
\(g(x)\in V\cap g(\operatorname{supp}(X))\). Since \(V\) is an arbitrary open
neighbourhood of \(y\), we conclude that \(y\in\operatorname{cl}\big(g(\operatorname{supp}(X))\big)\).
Thus
\[
\operatorname{supp}(g(X))\subseteq\operatorname{cl}\big(g(\operatorname{supp}(X))\big).
\]
To prove the reverse inclusion, let $y \in \operatorname{cl}\big(g(\operatorname{supp}(X))\big)$. 
By definition of closure, for every open neighborhood $U$ of $y$ in $T$, we have 
$U \cap g(\operatorname{supp}(X)) \neq \varnothing$. Hence, there exists 
$y' \in U \cap g(\operatorname{supp}(X))$, and therefore some $x' \in \operatorname{supp}(X)$ 
such that $y' = g(x') \in U$. Equivalently, $x' \in g^{-1}(U)$.  

Since $g$ is continuous, $g^{-1}(U)$ is open in $S$. Moreover, because $x' \in \operatorname{supp}(X)$, 
it follows that 
\[
P_X(g^{-1}(U)) > 0. 
\]
But $P_Y(U) = P_X(g^{-1}(U))$, and thus 
$P_Y(U) > 0$ for every open neighborhood $U$ of $y$. By the definition of support, this shows that 
$y \in \operatorname{supp}(g(X))$.  

Therefore,
\[
\operatorname{cl}\big(g(\operatorname{supp}(X))\big) \subseteq \operatorname{supp}(g(X)).
\]
Combining both inclusions yields the claimed equality.
\end{proof}

\end{document}